\keywords{finitely presented groups, random groups, small cancellation.}
\DeclareMathOperator{\FR}{FR}
\DeclareMathOperator{\CR}{CR}
\DeclareMathOperator{\C}{C}
\DeclareMathOperator{\NC}{NC}
\begin{document}

\title{Density of Metric Small Cancellation in Finitely Presented Groups}

\author{Alex Bishop}
\address{University of Technology Sydney, Australia}
\urladdr{\url{https://alexbishop.github.io}}
\email{\href{mailto:alexbishop1234@gmail.com}{alexbishop1234@gmail.com}}

\author{Michal Ferov}
\address{University of Newcastle, Australia}
\email{\href{mailto:michal.ferov@gmail.com}{michal.ferov@gmail.com}}

\begin{abstract}
	Small cancellation groups form an interesting class with many desirable properties.
	It is a well-known fact that small cancellation groups are generic; however, all previously known results of their genericity are asymptotic and provide no information about ``small'' group presentations.
	In this note, we give closed-form formulas for both lower and upper bounds on the density of small cancellation presentations, and compare our results with experimental data.
\end{abstract}
\maketitle

\section{Introduction}

Informally speaking, a group is a $\C'(\lambda)$ small cancellation group if it is given by a presentation that satisfies the $\C'(\lambda)$ metric small cancellation condition, i.e.\ a presentation where no two relators share a common segment of proportion $\lambda$ (see Subsection~\ref{subsec:definition-of-small-cancellation} for a formal definition). Small cancellation groups form a class with many desirable algebraic and algorithmic properties. For example, both the word problem and the conjugacy problem are uniformly solvable in linear time by Dehn's algorithm~\cite{Greendlinger}. Following \cite[Lemma~3]{Arzhantseva} and \cite[Section~9.B]{Gromov_asymptotic} it is known that small cancellation presentations are ``generic'' meaning that a random presentation will most likely present a small cancellation group (see Subsection~\ref{subsec:random-groups}). Furthermore, given a finite presentation one can easily check whether or not it satisfies the small cancellation property: all one needs to do is to inspect all pairs of relators for a common segment of critical length. For these reasons small cancellation groups were suggested as a platform for computation in several cryptographic protocols (see~\cite{Zapata1, Zapata2, Cavallo, Habeeb}).

The results of \cite{Arzhantseva} and \cite{Gromov_asymptotic} on genericity of small cancellation groups are asymptotic, stating that a ``big enough presentation'' will, with overwhelming probability, be a small cancellation presentation. In particular, neither of these papers specify how big is ``big enough''. For practical applications, such as in cryptography, this is not sufficient. In this paper we improve the aforementioned results by giving closed-form formulas for both a lower and an upper bound on the probability that a random presentation satisfies the small cancellation condition.
Moreover, using these bounds, we are able to derive the asymptotic bounds on genericity as given in \cite{Arzhantseva,Gromov_asymptotic}.

In Lemma~\ref{sec:lower-bound}, we will see that we have a lower bound as follows.

\begin{thm}%
    \label{thm:lower}
    There is a function $p^{\leqslant}_\lambda(r, \ell_1, \ell_2, m)$ given by a closed-form formula such that a presentation chosen uniformly at random from the set of all presentations of the form $\langle X \mid W \rangle$, where $|X| = r$, $|W| = m$ and $\ell_1 \leqslant |w| \leqslant \ell_2$ for each $w \in W$, is power-free and satisfies the metric small cancellation condition $\C'(\lambda)$ with probability at least $p^{\leqslant}_\lambda(r, \ell_1, \ell_2, m)$.
    Moreover, we have
    \[
    	1 - p^\leqslant_\lambda(r,\ell_1,\ell_2,m)
    	\leqslant
    		8 m^2 r\ell_2^2 (\ell_2-\ell_1+1) (2r-1)^{-\lambda\ell_2-1},
    \]
    and thus $\lim_{\ell_2\to\infty} p^{\leqslant}_\lambda(r, \ell_1, \ell_2, m) = 1$ for each fixed $r \geqslant 2$, $\lambda$ and $m$.
\end{thm}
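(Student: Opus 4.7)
The plan is a union-bound / first-moment calculation over all ``bad configurations'' in the random presentation. Model the sample space by drawing each relator $w_i$ independently and uniformly among reduced words over $X\cup X^{-1}$ of length in $[\ell_1,\ell_2]$; the function $1-p^{\leqslant}_\lambda$ is then obtained as the union-bound estimate for the probability of the event ``the resulting presentation is not power-free or violates $\C'(\lambda)$''.

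The first step is to repackage the bad event combinatorially. A violation of $\C'(\lambda)$ is exactly the existence of relator indices $a,b\in\{1,\dots,m\}$, cyclic starting positions $i\in\{1,\dots,|w_a|\}$ and $j\in\{1,\dots,|w_b|\}$, a sign $\varepsilon\in\{\pm 1\}$, and an integer $k\geqslant \lceil \lambda\min(|w_a|,|w_b|)\rceil$ for which the length-$k$ subwords of the cyclic word $w_a$ starting at $i$ and of $w_b^\varepsilon$ starting at $j$ agree as abstract words while coming from genuinely distinct occurrences in the symmetrized closure. The event that some relator is a proper power is handled by essentially the same mechanism, since $w_i = s^q$ for $q\geqslant 2$ forces many coincidences between $w_i$ and its own cyclic shifts. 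Since a longer piece always forces a shorter one, it suffices to test at the minimal admissible $k$.

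The second step is the probability of a single coincidence. The key input is that for any fixed reduced word $u$ of length $k$, the probability that a uniformly random reduced word of length $\geqslant k$ contains $u$ as a subword starting at a prescribed position is at most a small constant times $(2r-1)^{-k}$. This follows directly from the standard count $2r(2r-1)^{\ell-1}$ of reduced words of length $\ell$ together with a brief boundary analysis; when $a=b$ the two subwords sit inside a common relator, but the match condition still factors appropriately because the two occurrences lie at distinct positions. A union bound over the $m^2$ choices of $(a,b)$, the at most $\ell_2^2$ start-position pairs, the constant number of signs and orientations, and the $\ell_2-\ell_1+1$ choices of the length parameter that pins down $k$ then produces the claimed closed-form bound. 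The limit $p^{\leqslant}_\lambda \to 1$ as $\ell_2 \to \infty$ is immediate, since for $r\geqslant 2$ the base $(2r-1)\geqslant 3$ forces the exponential factor $(2r-1)^{-\lambda\ell_2-1}$ to overwhelm the polynomial prefactor $8m^2r\ell_2^2(\ell_2-\ell_1+1)$.

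The main technical obstacle will be the careful bookkeeping inside the union bound, and in particular three delicate points: handling self-overlaps ($a=b$) so that the tautological overlap of a subword with itself is not spuriously counted as a piece; managing the boundary corrections arising from the reduced-word constraint when the relevant subword straddles the cyclic seam of its relator; and aligning the case analysis over random lengths so that exactly the factors $\ell_2^2(\ell_2-\ell_1+1)(2r-1)^{-\lambda\ell_2-1}$ fall out with the stated numerical constant $8r$, rather than being replaced by weaker surrogates.
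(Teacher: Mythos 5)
Your overall strategy is the same as the paper's: a first-moment union bound over all critical-length coincidences, split into same-relator coincidences (the paper's condition $\NC_\lambda^1$, which also absorbs failures of power-freeness exactly as you observe) and distinct-relator coincidences ($\NC_\lambda^2$), together with the remark that it suffices to test pieces at the minimal admissible length $\lceil\lambda\cdot\min(|w_a|,|w_b|)\rceil$. The distinct-relator count and the final bookkeeping go through as you describe and reproduce the paper's bound $1-p^\leqslant_\lambda \leqslant m\,\NC_\lambda^1/\CR + \binom{m}{2}\NC_\lambda^2/\CR^2$ followed by the crude estimates of Corollaries~\ref{cor:rough-bound-on-NC1} and~\ref{cor:rough-upper-bound-on-NC2}.

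The one genuine gap is your treatment of the case $a=b$ with overlapping windows. You assert that ``the match condition still factors appropriately because the two occurrences lie at distinct positions,'' but when the two length-$\lceil\lambda\ell\rceil$ windows of the same cyclic word overlap (offset difference $k<\lceil\lambda\ell\rceil$), the two windows share letters and no independence or factoring argument applies; this is not the ``tautological self-overlap'' you flag, which is trivially excluded. The paper's resolution is a periodicity argument: after ruling out $x=y^{-1}$ (which would force a nonempty word equal to its own inverse), a coincidence of overlapping windows forces the common piece to have the form $(x_1\cdots x_k)^p\,x_1\cdots x_q$ with $x_1\cdots x_k$ cyclically reduced, so one counts at most $\ell\cdot\CR(r,k)\cdot(2r-1)^{\ell-\lceil\lambda\ell\rceil-k}$ such words and sums over $k$. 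This still yields the desired order $(2r-1)^{-\lceil\lambda\ell\rceil}$ per configuration, so your final bound is attainable, but the periodicity step is a missing idea rather than bookkeeping. A second, minor point: the paper's model draws each relator uniformly from \emph{cyclically} reduced words, normalizing by $\CR(r,\ell_1,\ell_2)$ rather than by the number of merely reduced words, and your sketch should be adjusted accordingly to land on the stated constant.
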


Moreover, from Propositions~\ref{prop:upper-bound-on-probability} and \ref{prop:necessary bound on m} we have an upper bound on the probability of small cancellation given in Theorem~\ref{thm:upper} below.

\begin{thm}%
    \label{thm:upper}
    There is a function $p^{\geqslant}_\lambda(r, \ell, m)$ given by a closed-form formula such that a presentation chosen uniformly at random from the set of all presentations of the form $\langle X \mid W \rangle$, where $|X| = r$, $|W| = m$ and $|w| = \ell$ for all $w \in W$, is power-free and satisfies the metric small cancellation condition $\C'(\lambda)$ with probability at most $p^{\geqslant}_\lambda(r, \ell, m)$.
    Moreover, for each $m \geqslant 1$ and $r\geqslant 2$, we have
    \[
    	\ln(1/p^{\geqslant}_\lambda(r, \ell, m))
    	\geqslant
    	\frac{1}{8} {(m-1)}^2 \ell {(2r-1)}^{-\lceil\lambda\ell\rceil},
    \]
    that is, $p^{\geqslant}_\lambda(r, \ell, m)$ is not simply the constant function $1$.
    Notice that from Theorem~\ref{thm:lower} we have $\lim_{\ell\to\infty} p^{\geqslant}_\lambda(r, \ell, m) = 1$, and thus
    \[
    	\lim_{\ell\to\infty}\ln(1/p^{\geqslant}_\lambda(r, \ell, m)) = 0
    \]
    for each fixed $r \geqslant 2$, $\lambda$ and $m$.
\end{thm}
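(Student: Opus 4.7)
The plan is to produce an explicit collection of ``bad'' coincidence events, each of which forces failure of the $\C'(\lambda)$ condition, and to lower-bound the overall probability that at least one of them occurs by means of the standard inequality $-\ln(1-x)\geqslant x$.

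Set $L=\lceil\lambda\ell\rceil$. For an ordered pair of distinct indices $(i,j)$ and starting positions $s$ in $w_i$ and $t$ in $w_j$ with $s,t\in\{1,\ldots,\ell-L+1\}$, let $E_{i,j,s,t}$ be the event that the length-$L$ factors of $w_i$ and $w_j$ at positions $s$ and $t$ coincide. Whenever such an event occurs, the common factor is a piece of length at least $\lambda\ell$ shared between two relators, so the $\C'(\lambda)$ condition fails. Consequently, the probability that a random presentation satisfies $\C'(\lambda)$ is bounded above by the probability that \emph{none} of the events $E_{i,j,s,t}$ occurs; the latter quantity, computed exactly inside whatever uniform model one adopts (reduced words of length $\ell$ over $2r$ letters), is what Proposition~\ref{prop:upper-bound-on-probability} packages as $p^{\geqslant}_\lambda(r,\ell,m)$.

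The core combinatorial step is to estimate a single $E_{i,j,s,t}$: after conditioning on $w_i$ and hence on its chosen factor, each successive letter of the matching factor of $w_j$ has at most $2r-1$ admissible continuations (to stay reduced), giving single-event probability at least $(2r-1)^{-L}$ under the natural uniform model. To obtain the factor $(m-1)^2\ell$ in the stated bound I would restrict to a subcollection of events in which the pair $\{i,j\}$ ranges over $\binom{m}{2}\geqslant (m-1)^2/2$ unordered pairs and, say, $t$ ranges over the $\ell-L+1\geqslant \tfrac{1}{2}\ell$ admissible shifts in $w_j$ while $s$ is fixed at $1$. Taking logarithms and applying $-\ln(1-x)\geqslant x$ termwise then yields
\[
\ln(1/p^{\geqslant}_\lambda(r,\ell,m))\;\geqslant\;\binom{m}{2}\cdot(\ell-L+1)\cdot (2r-1)^{-L},
\]
and after absorbing constants this is at least $\tfrac{1}{8}(m-1)^2\ell(2r-1)^{-L}$, as claimed (this is essentially Proposition~\ref{prop:necessary bound on m}). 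The two concluding assertions follow immediately: the quantitative estimate forces $p^{\geqslant}_\lambda<1$ for $m\geqslant 2$, while $\lim_{\ell\to\infty}p^{\geqslant}_\lambda=1$ is obtained by squeezing between $p^{\geqslant}_\lambda$ and the lower bound $p^{\leqslant}_\lambda$ from Theorem~\ref{thm:lower}, whose tail estimate tends to $1$.

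The main obstacle is dependence: the events $E_{i,j,s,t}$ are not independent, both because different shifts within the same pair of words produce overlapping constraints and because different pairs share a common relator. Extracting a subfamily large enough to yield the $(m-1)^2\ell$ factor while preserving a genuine independence (or a quantitative near-independence from conditioning) is the delicate technical step; it is presumably achieved by fixing one endpoint of each piece, and by exploiting that each unordered pair of relators involves at least one fresh independent random word, so that one may condition on the remaining letters and read off the coincidence probability cleanly.
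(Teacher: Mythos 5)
Your overall shape is right---the $(m-1)^2\ell(2r-1)^{-\lceil\lambda\ell\rceil}$ factor does come from counting pairwise coincidences of length-$\lceil\lambda\ell\rceil$ pieces across pairs of relators and shifts---but the step you defer to the end is not a technicality: it is the entire proof, and the route you sketch for it does not work as stated. To get $\ln(1/p^{\geqslant})\geqslant\sum P(E_{i,j,s,t})$ you need the probability of avoiding \emph{all} bad events to be bounded above by $\prod(1-P(E_{i,j,s,t}))$, i.e.\ you need an upper bound on $P(\bigcap\overline{E})$, equivalently a \emph{lower} bound on $P(\bigcup E)$. The union bound and the termwise inequality $-\ln(1-x)\geqslant x$ only give the opposite direction; without independence (which fails badly here, since the events $E_{i,j,1,t}$ for varying $t$ constrain overlapping factors of the same word $w_j$, and different pairs share a relator) the displayed inequality is unjustified, and your closing paragraph acknowledges as much without supplying the missing argument.

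The paper closes exactly this gap with a counting observation you do not make: if $\C'(\lambda)$ holds, then the length-$\lceil\lambda\ell\rceil$ prefixes of \emph{all} $2m\ell$ cyclic shifts $(w_j^{\pm1})_{\ll d}$ must be pairwise \emph{distinct} words. Consequently, when the relators are chosen sequentially, the prefix of $w_i$ must avoid $2(i-1)\ell$ distinct forbidden prefixes, and the sets of cyclically reduced length-$\ell$ words beginning with these prefixes are \emph{disjoint} cylinders, each of cardinality at least $(2r-2)(2r-1)^{\ell-\lceil\lambda\ell\rceil-1}$; their sizes therefore add exactly, giving the unconditional count $\omega_i(r,\ell,m)=\CR(r,\ell)-4(i-1)\ell(r-1)(2r-1)^{\ell-\lceil\lambda\ell\rceil-1}$ for the admissible choices of $w_i$ (Proposition~\ref{prop:upper-bound-on-probability}). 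Multiplying these conditional counts, discarding all but the last $\lfloor m/2\rfloor$ factors (Corollary~\ref{cor:rough-upper-bound}), and then taking logarithms and using the Taylor expansion of $\ln(1-x)$ yields Proposition~\ref{prop:necessary bound on m} and hence the stated inequality. So the disjointness of the forbidden prefix-cylinders, forced by the small cancellation condition itself, is the substitute for the independence you were hoping for; without it your argument does not go through. (The limit statements at the end are fine and are handled the same way in both your sketch and the paper.)
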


Using the lower bound presented in Section~\ref{sec:lower-bound}, we show that the probability of obtaining a small cancellation presentation is non-trivial even for relatively small parameters, and compare our results with experimental data.

The organisation of the paper is as follows.
In Section~\ref{sec:preliminaries}, we provide the preliminary notions;
in particular, in Subsection~\ref{subsec:definition-of-small-cancellation} we recall the formal definition of metric small cancellation, and in Subsection~\ref{subsec:random-groups} we recall the notion of random groups.
In Section~\ref{sec:density-of-small-cancellation}, we give the main results of this paper; in particular, Subsection~\ref{sec:lower-bound} derives a lower bound for the probability of small cancellation in terms of the given parameters of the presentation, and in Subsection~\ref{sec:upper-bound} we give an upper bound.
In Section~\ref{sec:relator-length}, we combine these two bounds to discuss the limitations on the choice of parameters in regards to maximise the probability of small cancellation.
Finally, in Appendix~\ref{sec:experimental-results} we compare our theoretical results with experimental data; in particular, we provide several heat maps which show how our bounds differ as we vary the parameters of the presentation.

\section{Preliminaries}%
\label{sec:preliminaries}

Given a finite set $X = \{ x_1, x_2, \ldots, x_r \}$, we denote the free group generated by $X$ as $F(X)$.
Further, we write $w \in F(X)$ to denote that $w$ is a freely reduced word in $\left(X^{\pm 1}\right)^*$, that is, $w$ does not contain $xx^{-1}$ or $x^{-1}x$ as a factor for any $x \in X$.
Notice that each word $w \in F(X)$ corresponds to a unique element of the free group on the generating set $X$.

Let $w = x_{i_1}^{\epsilon_1} x_{i_2}^{\epsilon_2} \cdots x_{i_k}^{\epsilon_k}$ with $x_{i_1}, x_{i_2}, \ldots, x_{i_k} \in X$ and each $\epsilon_j \in \{-1,1\}$.
Then we define the \emph{word length} of $w$ as $|w|_X = k$; and $|w|$ when the generating set $X$ is clear from the context.
Further, for each $0 \leqslant d < k = |w|$ we write $w_{\ll d}$ to denote the \emph{(left) cyclic permutation of $w$ by a distance of $d$}, that is,
\[
	w_{\ll d}
	=
	x_{i_{d+1}}^{\epsilon_{d+1}}
	x_{i_{d+2}}^{\epsilon_{d+2}}
	\cdots
	x_{i_k}^{\epsilon_k}
	\ 
	x_{i_1}^{\epsilon_1}
	x_{i_2}^{\epsilon_2}
	\cdots
	x_{i_d}^{\epsilon_d}.
\]
We say that a word $w$ is \emph{cyclically reduced} if all of its cyclic permutations are freely reduced, or equivalently, if $w = x_{i_1}^{\epsilon_1} x_{i_2}^{\epsilon_2} \cdots x_{i_k}^{\epsilon_k}$ is freely reduced and $x_{i_1}^{\epsilon_1} \neq x_{i_k}^{-\epsilon_k}$ with respect to the free group $F(X)$.

Let $X$ be a set with $r = |X|$ elements, and $W \in F(X)^m$ be a list of $m$ words, where each $w \in W$ is cyclically reduced; then $\langle X \mid W \rangle$ is a presentation with $r$ generators and $m$ relators.
Notice that $W$ may contain the same element twice; and further presentations that differ only by permuting relators are considered to be distinct. For example $\langle x,y \mid x^2, y^2 \rangle$ and $\langle x,y \mid  y^2, x^2 \rangle$ are considered to be distinct presentations.
For the ease of writing, as a slight abuse of notation, $w \in W$ will denote that there is an $i \in \{1, 2, \ldots, m\}$ such that $\pi_i(W) = w$, where $\pi_i \colon F(X)^m \to F(X)$ is the projection onto the $i$-th component of $F(X)^m$.

\subsection{Small Cancellation Presentations}%
\label{subsec:definition-of-small-cancellation}

The notation and terminology used in this section follows that of \cite{LS}.

We denote the \emph{symmetric closure} of a finite list of words $W \subset F(X)^*$ as
\[
	W^S
	=
	\left\{
    	w_{\ll d}, (w_{\ll d})^{-1}
    \, \middle\vert \,
    	w \in W
    	\text{ and }
    	0 \leqslant d < |w|
	\right\}.
\]
We say that a word $u$ is a \emph{symmetric consequence} of a word $w$ if $u \in (w)^S$.
Further, we say that $W$ is \emph{minimal} if there is no proper sublist $U$ such that $U^S = W^S$.
For example, $(aaaa,baba,abab)$ is not minimal as $baba = abab_{\ll 1}$, however, the list $(aaaa,abab)$ is minimal.

Let $w \in F(X)$, then the maximum size of $(w)^S$ is given by $2|w|$, that is, $|(w)^S| \leqslant 2|w|$.
Notice that a cyclically reduced word $w$ factors as a proper power, $w = u^n$, with $n > 1$, if and only if $|(w)^S| < 2|w|$; thus we say that $w$ is \emph{power-free} if we have $|(w)^S| = 2|w|$.

Let $\mathcal{P} = \left\langle X \,\middle\vert\, W \right\rangle$ be a presentation where $W \in F(X)^*$ is the list of cyclically reduced relators.
Then, we say that $\mathcal{P}$ has \emph{metric small cancellation $\C'(\lambda)$} if the list $W$ is minimal, each word $w$ in the list $W$ is power-free, and any pair of words $u,w \in W^S$ may only have a short common prefix; in particular, if $u = pa$, $w = pb$ with $p,a,b \in F(X)$ such that $|u| = |p| + |a|$ and $|w| = |p| + |b|$, then $|p| < \lambda\cdot\min(|u|,|w|)$.

Furthermore, as Greedinger's lemma \cite{Greendlinger} applies to presentations with property $\C'(1/6)$, we will only be interested in the case where $\lambda \leqslant 1/6$.
From our definition of small cancellation presentations as given above, a group with property $\C'(1/6)$ is torsion-free hyperbolic.

Moreover, in this note we will only be interested in groups with at least two generators.
Thus, in the remainder of this paper we have $r = |X| \geqslant 2$.

\subsection{Random groups}%
\label{subsec:random-groups}

In this subsection we recall the notion of random groups and random presentations. For more details we refer the reader to the survey \cite{random}.
Notice that we require each relater in a presentation to be cyclically reduced.

In this subsection, we fix a generating set $X$ with cardinality $r=|X| \geqslant 2$.
As stated previously, $\langle X \mid W \rangle$ is a presentation on $m$ relators over $X$ if $W \in F(X)^m$ where each relator $w \in W$ is cyclically reduced.
We write $\mathcal{W}_{m,\ell}$ for the set of all presentations with $m$ relators, each with length at most $\ell$; and $\mathcal{W} = \bigcup_{m=1}^\infty \bigcup_{\ell=1}^\infty \mathcal{W}_{m,\ell}$ for the set of all presentation.

Let $\mathcal{P} \subseteq \mathcal{W}$ be a set of presentations, then we say that a randomly chosen presentation from $\mathcal{W}_{m,\ell}$ belongs to the set $\mathcal{P}$ with probability
\[
    p_{m,\ell}(\mathcal{P})
    =
    \frac{|\mathcal{P}\cap \mathcal{W}_{m,\ell}|}{|\mathcal{W}_{m,\ell}|}.
\]

The main two models of randomness in group theory are the \emph{few relations model} and the \emph{density model}.
We say that a set of presentations $\mathcal{P} \subseteq \mathcal{W}$ is \emph{generic} in the few relation model if, for each $m \geqslant 1$, we have
\[
	\lim_{\ell \to \infty} p_{m,\ell}(\mathcal{P}) = 1.
\]
Furthermore, we say that $\mathcal{P}$ is \emph{strongly generic} if this limit converges exponentially fast.
It was proved in \cite[Lemma~3]{Arzhantseva} that the set of all presentations satisfying the metric small cancellation $\C'(\lambda)$ is strongly generic.

Let some $d$ with $0 \leqslant d \leqslant 1$ be given and let $f_{X,d}(\ell) = (2r-1)^{d\ell}$.
Then, we say that a set of presentations $\mathcal{P}$ is \emph{generic at density $d$} if
\[
    \lim_{\ell \to \infty}
    p_{f_{X,d}(\ell),\ell}(\mathcal{P}) = 1
\]
and we say that $\mathcal{P}$ is \emph{negligible at density $d$} if
\[
    \lim_{\ell \to \infty} 
    p_{f_{X,d}(\ell),\ell}(\mathcal{P}) = 0.
\]

It was proved in \cite[Section~9.B]{Gromov_asymptotic} that, for $0 < \lambda < 1$, the set of all presentations satisfying the metric small cancellation condition $\C'(\lambda)$ is generic at density $d$ if $d < \lambda/2$, and negligible at density $d$ if $d > \lambda/2$.

Using the result in Theorem~\ref{thm:lower} we are able to show that small cancellation is strongly generic with respect to the few relations model, and that small cancellation is generic at densities $d < \lambda/2$.
In particular, from Theorem~\ref{thm:lower} we have the upper bound
\[
	1 - p^\leqslant_\lambda(r,0,\ell,m)
	\leqslant
	8 m^2 r\ell^3 (2r-1)^{-\lambda\ell-1},
\]
where the limit $\lim_{\ell \to \infty}(1 - p^\leqslant_\lambda(r,0,\ell,m)) = 0$ converges exponentially fast for each $r$, $m$ and $\lambda$.
Then, we find that the limit $\lim_{\ell \to \infty} p^\leqslant_\lambda(r,0,\ell,m) = 1$ converges exponentially fast, and thus small cancellation is strongly generic.
Moreover, we find that, for each $0 \leqslant d <1$, we also have the bound
\[
	1 - p^\leqslant_\lambda(r,0,\ell,f_{X,d}(\ell))
	\leqslant
	8 r\ell^3 (2r-1)^{(2d-\lambda)\ell-1}.
\]
Then, we see that $\lim_{\ell\to\infty} (1 - p^\leqslant_\lambda(r,0,\ell,f_{X,d}(d\ell))) = 0$ for each $d < \lambda/2$, and thus small cancellation is generic at density $d$ if $d < \lambda/2$.

Another version of the density model was considered in \cite{colva}, where the authors fix the length and let the number of generators grow. We will call this model the Ashcroft and Roney-Dougal density model. Using Theorem~\ref{thm:lower} we immediately get a statement similar to the positive part of \cite[Section~9.B]{Gromov_asymptotic}.

\begin{prop}
    The set of power-free finite presentations satisfying property $\C'(\lambda)$ is generic in the density model of Ashcroft and Roney-Dougal at densities $d < \lambda/2$.
\end{prop}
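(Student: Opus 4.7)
The plan is to specialise the bound of Theorem~\ref{thm:lower} to the regime of the Ashcroft and Roney-Dougal model, in which the length $\ell$ of relators is held fixed, the number of generators $r$ is the parameter tending to infinity, and the number of relators is taken to be $m = \lfloor (2r-1)^{d\ell}\rfloor$. Since all relators have the common length $\ell$, I would take $\ell_1 = \ell_2 = \ell$ in Theorem~\ref{thm:lower}, which collapses the factor $(\ell_2 - \ell_1 + 1)$ to $1$.

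With this choice, Theorem~\ref{thm:lower} yields
\[
	1 - p^{\leqslant}_\lambda(r, \ell, \ell, m)
	\leqslant 8 m^2 r \ell^2 (2r-1)^{-\lambda\ell - 1}.
\]
Substituting the bound $m \leqslant (2r-1)^{d\ell}$ and combining exponents then gives
\[
	1 - p^{\leqslant}_\lambda(r, \ell, \ell, m)
	\leqslant 8 r \ell^2 (2r-1)^{(2d - \lambda)\ell - 1}.
\]
For fixed $\ell$, the exponent $(2d - \lambda)\ell$ is a strictly negative constant whenever $d < \lambda/2$, so the right-hand side decays as a negative power of $r$ and tends to $0$ as $r \to \infty$. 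This gives $\lim_{r\to\infty} p^{\leqslant}_\lambda(r, \ell, \ell, m) = 1$, which is precisely the genericity statement claimed by the proposition.

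The main issue is really only cosmetic: one needs to interpret $m = (2r-1)^{d\ell}$ as an integer, which is handled by taking the floor, and since the bound of Theorem~\ref{thm:lower} is monotone in $m$ the rounding only tightens the inequality. The core argument is thus just a direct substitution into the lower bound of Theorem~\ref{thm:lower}, parallel to the derivation of the Gromov density result given earlier in the introduction, and no new combinatorial input is required.
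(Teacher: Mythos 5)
Your proposal is correct and is essentially the paper's own (implicit) argument: the paper derives this proposition by the same direct substitution of $m=(2r-1)^{d\ell}$ into the bound of Theorem~\ref{thm:lower}, exactly as it does for the Gromov density model a few lines earlier. Your observation that the net power of $r$ on the right-hand side is $(2d-\lambda)\ell<0$ (the leading factor $r$ being absorbed by the $-1$ in the exponent of $2r-1$) correctly completes the limit as $r\to\infty$ with $\ell$ fixed.
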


\section{Density of Small Cancellation}%
\label{sec:density-of-small-cancellation}

As was mentioned in the previous section, it is well-known that small cancellation is generic, i.e.\ ``almost all'' presentations satisfy metric small cancellation.
However, both \cite[Lemma~3]{Arzhantseva} and \cite[Section~9.B]{Gromov_asymptotic} are purely asymptotic statements and neither informs us of what happens for relatively small parameters.
Thus, in this section we give closed-form formulas for both lower and upper bounds on the probability that a random presentation with given parameters will have small cancellation.

To simplify notation we write $F_r$ to denote a free group of rank $r$, that is, $F_r = F(X)$ where $X = \{ x_1, x_2, \ldots, x_r \}$.
Let $\FR(r,\ell)$ denote the number of freely reduced words of length $\ell$ in $F_r$, then
\[
	\FR(r,\ell)
	=
	2r(2r-1)^{\ell-1}.
\]
Further, let $\CR(r,\ell)$ denote the number of cyclically reduced words of length $\ell$ in $F_r$, then, as was shown by Rivin~\cite[Theorem~1.1]{Rivin},
\[
	\CR(r,\ell)
	=
	(2r-1)^{\ell}
	+ 1
	+ (r-1)\left(1+(-1)^\ell\right).
\]
Moreover, we write $\CR(r,\ell_1, \ell_2)$ to denote the total number of cyclically reduced words of length $\ell$, where $\ell_1 \leqslant \ell \leqslant \ell_2$, in $F_r$.
That is,
\[
	\CR(r,\ell_1,\ell_2)
	=
	\sum_{\ell=\ell_1}^{\ell_2} \CR(r,\ell).
\]

Notice that, if a presentation $\mathcal{P} = \left\langle X \mid W \right\rangle$ does not satisfy small cancellation $\C'(\lambda)$, then it must satisfy at least one of the following two conditions.

\begin{enumerate}
	\item $\NC_\lambda^1$ ---
	there is a relator $w \in W$, and two offsets $d_1,d_2 \in \mathbb{N}$ with $0 \leqslant d_1 < d_2 < |w|$, such that $w' = w_{\ll d_1}$ and $w'' = w_{\ll d_2}$ factor as $w' = xa$, $w'' = yb$ where $a,b,x,y \in F_r$, $x = y^{\pm 1}$ and $|x| \geqslant \lambda |w|$.
	
	\item $\NC_\lambda^2$ ---
	there are two relators $w_1,w_2 \in W$ with cyclic permutations $w_1'$ and $w_2'$, respectively, that factor as $w_1' = xa$ and $w_2' = yb$ where $a,b,x,y \in F_r$, $x = y^{\pm 1}$ and $|x| \geqslant \lambda \cdot \min(|w_1|,|w_2|)$.
\end{enumerate}

\noindent
We write $\NC_\lambda^1(r,\ell)$ to denote the number of length $\ell$ words $w \in F_r$ satisfying property $\NC_\lambda^1$; and $\NC_\lambda^2(r,\ell_1,\ell_2)$ to denote the number of word pairs $w_1,w_2 \in F_r$, each with lengths between $\ell_1$ and $\ell_2$, that satisfying property $\NC_\lambda^2$.
Furthermore, we write $\NC_\lambda^1(r,\ell_1,\ell_2)$ to denote the sum $\sum_{\ell=\ell_1}^{\ell_2} \NC_\lambda^1(r,\ell)$;
and $\NC_\lambda^2(r,\ell)$ to denote $\NC_\lambda^2(r,\ell,\ell)$.

Suppose that we choose a presentation $\mathcal{P} = \left\langle X \mid W \right\rangle$ uniformly at random from the class of presentations with $|X| = r$, $|W| = m$ and $\ell_1\leqslant |w| \leqslant \ell_2$ for each $w \in W$.
Then, we denote the probability of $\mathcal{P}$ having property $\C'(\lambda)$ as $p_\lambda(r,\ell_1,\ell_2,m)$.
In the remainder of this section, we derive lower and upper bounds for this probability.

\subsection{Lower bounds}%
\label{sec:lower-bound}

In the following, we derive a closed-form lower bound $p^\leqslant_\lambda(r,\ell_1,\ell_2,m)$ on the probability of a randomly chosen presentation having small cancellation with the given parameters.

Clearly, we have the lower bound
\begin{equation}%
\label{eq:overall-bound}
    p_\lambda(r,\ell_1,\ell_2,m)
    \geqslant
    1
    -
        m
        \cdot
        \frac{\NC_\lambda^1(r,\ell_1,\ell_2)}{\CR(r,\ell_1,\ell_2)}
    -
        \binom{m}{2}
        \cdot
        \frac{\NC_\lambda^2(r,\ell_1,\ell_2)}{\CR(r,\ell_1,\ell_2)^2}.
\end{equation}

Thus, to find a lower bound on $p_\lambda(r,\ell_1,\ell_2,m)$, we will derive upper bounds on $\NC_\lambda^1(r,\ell)$ and $\NC_\lambda^2(r,\ell_1,\ell_2)$.
In particular, we obtain the bounds given in Lemmas~\ref{lemma:accurate-approximation-of-NC1} and \ref{lemma:accurate-approximation-of-NC2} below.

\begin{lem}%
\label{lemma:accurate-approximation-of-NC1}
	We have the upper bound
	\[
		\NC_\lambda^1(r,\ell)
		\leqslant
		2\ell(\ell-2\left\lceil\lambda\ell\right\rceil - 2)
			\FR(r,\left\lceil \lambda\ell\right\rceil)
			(2r-1)^{\ell-2\left\lceil\lambda\ell\right\rceil}
		+
		\sum_{k=1}^{\left\lceil \lambda\ell \right\rceil}
			\ell
			\CR(r,k)
			(2r-1)^{\ell-\left\lceil \lambda\ell\right\rceil - k}.
	\]
\end{lem}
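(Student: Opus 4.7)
The plan is to union-bound $\NC^1_\lambda(r,\ell)$ over the possible witnesses of property $\NC^1_\lambda$: triples $(d_1, d_2, \epsilon)$ with $0 \leqslant d_1 < d_2 < \ell$ and $\epsilon \in \{+,-\}$ specifying two offsets at which the length-$k_0 := \lceil\lambda\ell\rceil$ windows of $w$ (read starting at positions $d_1, d_2$ cyclically) agree up to sign $\epsilon$. This union bound splits naturally into two regimes according to whether those two windows overlap cyclically in $w$.

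In the disjoint regime, where the cyclic gap $j = d_2 - d_1$ satisfies $k_0 \leqslant j \leqslant \ell - k_0$, the two windows cover $2k_0$ disjoint positions of $w$. I would first pick the common window $x$ as any freely reduced word of length $k_0$, contributing $\FR(r, k_0)$ choices, and then fill each of the remaining $\ell - 2k_0$ positions with at most $2r-1$ letters, so that no letter cancels with its predecessor. Multiplying by the number of admissible tuples $(d_1, d_2, \epsilon)$ (essentially $2\ell(\ell - 2k_0)$ once one accounts for the two signs, the cyclic freedom of $d_1$, and a small boundary correction) yields the first summand of the stated bound.

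In the overlap regime, write $k = j$ where $1 \leqslant k \leqslant k_0 - 1$. In the $\epsilon = +$ case, the forced agreement of the two windows imposes $x_s = x_{s+k}$ for $s = 0, \ldots, k_0 - k - 1$, so $x$ is periodic of period $k$; hence $x$ is determined by its first $k$ letters, and the requirement that the full length-$k_0$ extension be freely reduced is exactly the condition that these $k$ letters themselves form a cyclically reduced word. This gives $\CR(r,k)$ choices for $x$. The two windows together span $k_0 + k$ positions of $w$, leaving $\ell - k_0 - k$ free positions, each with at most $2r-1$ valid letters. The $\epsilon = -$ case can be treated in parallel: the overlap forces a reflective (palindromic) identity on $x$ whose solution count is also bounded by $\CR(r,k)$. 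Multiplying by the $\ell$ cyclic choices of $d_1$ and summing over $k$ produces the second summand, with the term $k = k_0$ absorbing the tangent boundary case $j = k_0$ of the disjoint sum.

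The main obstacle is handling the $\epsilon = -$ overlap case cleanly, since the constraint is a palindromic reflection rather than a periodicity and needs a separate structural argument to be bounded by $\CR(r,k)$. A secondary bookkeeping issue is that the symmetric cyclic overlap on the opposite side (when $\ell - j < k_0$) must be folded into the same sum over $k$ via a change of variables; doing this correctly is what allows the single factor $\ell$ in the second summand, absorbing both overlap directions without an extra factor of $2$.
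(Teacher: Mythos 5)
Your decomposition into disjoint and overlapping window pairs is the same one the paper uses, and your treatment of the disjoint regime and of the $\epsilon=+$ overlap regime (periodicity of period $k$ with a cyclically reduced period word, hence $\CR(r,k)$ choices for the window) matches the paper's argument. The genuine gap is exactly the point you flag as your ``main obstacle'': the $\epsilon=-$ overlap case. You propose to bound its solution count separately by $\CR(r,k)$ and add it ``in parallel''; but even granting that unproven claim, the overlap regime would then contribute up to $2\ell\,\CR(r,k)\,(2r-1)^{\ell-\lceil\lambda\ell\rceil-k}$ per $k$, i.e.\ twice the second summand of the stated inequality, so your bookkeeping does not close.

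The resolution, which the paper supplies, is that the $\epsilon=-$ case with genuine overlap is impossible, not merely rare: if the two windows overlap in a nonempty word $c$ (so $x=x'c$ and $y=cy'$ with $|c|=\lceil\lambda\ell\rceil-k>0$), then $x=y^{-1}$ forces $c=c^{-1}$, and no nonempty freely reduced word equals its own inverse (an odd-length such $c$ would need a self-inverse middle letter, an even-length one would fail to be reduced at its midpoint). Hence only $\epsilon=+$ survives in the overlap regime and no factor of $2$ appears. Two smaller points you should also nail down: in the tangent case $j=\lceil\lambda\ell\rceil$, which you fold into the $k=\lceil\lambda\ell\rceil$ term, you need to observe that $\epsilon=-$ is again excluded (adjacent occurrences would make $w'$ contain the non-reduced factor $xx^{-1}$) and that $x$ must be cyclically reduced for $xx$ to be reduced, so that $\CR(r,\lceil\lambda\ell\rceil)$ rather than $\FR(r,\lceil\lambda\ell\rceil)$ is the correct count there.
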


\begin{proof}
Let $w \in F_r$ be a length $\ell$ cyclically reduced word chosen uniformly at random.
If $w$ satisfies property $\NC_\lambda^1$, then one of the following two cases must apply.
\begin{enumerate}
    \item\label{lemma:accurate-approximation-of-NC1:case 1}
    There is a cyclic permutation $w' = w_{\ll d}$ that factors as both $w' = xa$ and $w' = b_1 y b_2$ where $a,b_1,b_2,x,y \in F_r$ with $x = y^{\pm 1}$, $|x| = \lceil\lambda\ell\rceil$ and $1\leqslant |b_1| \leqslant |x|$.
    
    \item\label{lemma:accurate-approximation-of-NC1:case 2}
    There is a cyclic permutation $w' = w_{\ll d}$ that factors as $w' = xayb$ where $a,b,x,y \in F_r$ with $x = y^{\pm 1}$, $|x| = \lceil\lambda\ell\rceil$ and $|a|,|b| \geqslant 1$.
\end{enumerate}

\noindent
In case~\ref{lemma:accurate-approximation-of-NC1:case 1} it follows that $x = y$ and that $x$ is of the form
\[
    x = (x_1 x_2 \cdots x_k)^p \ x_1 x_2 \cdots x_q
\]
where $k = |b_1|$, $0 \leqslant q<|b_1|$ and the subword $x_1 x_2 \cdots x_k$ is cyclically reduced.

To see this, let $k = |b_1|$ and let $p,q \in \mathbb{N}$ be such that $|x| = p \cdot k + q$ where $0 \leqslant q < k$.
Now suppose $k = |x|$, so that $w'$ factors as $w' = x y b_2$; thus $x \neq y^{-1}$ and $x = x_1 x_2 \cdots x_k$ is cyclically reduced.
Suppose instead that $1 \leqslant k < |x|$; then $x$ and $y$ must factor as $x = x'c$ and $y = cy'$ where $x',y',c \in F_r$ and $c$ is of length $|x|-k$.
Thus, if $x=y^{-1}$, then $c = c^{-1}$ which is not possible as $c\neq \varepsilon$.
Hence, $x = y$ and, since $x$ and $y$ overlap, it follows that
\[
    x = (x_1 x_2 \cdots x_k)^p \ x_1 x_2 \cdots x_q
\]
where the subword $x_1 x_2 \cdots x_k$ is cyclically reduced.

We are now ready to consider the number of words counted in these two cases.
Let us consider case~\ref{lemma:accurate-approximation-of-NC1:case 1}.
Suppose that $k=|b_1|$; then there are $\ell$ choices for the shift $d$, $\CR(r,k)$ choices for the subword $x = (x_1 x_2 \cdots x_k)^p x_1 x_2 \cdots x_q$, and $(2r-1)^{\ell-\lceil \lambda\ell \rceil-k}$ choices for the remaining letters in the word $w$.
Thus, by summing over all such choices for $k$, we obtain
\[
    \sum_{k=1}^{\lceil \lambda\ell \rceil}
        \ell
        \CR(r,k)
        (2r-1)^{\ell-\lceil \lambda\ell \rceil-k}
\]
as an upper bound for the number of counted words.

Now consider case~\ref{lemma:accurate-approximation-of-NC1:case 2}.
There are $\ell$ choices for the offset $d$, $2\cdot\FR(r,\lceil \lambda\ell\rceil)$ choices for the pair $x$ and $y$,  $\ell - 2 \lceil \lambda\ell\rceil - 2$ choices for $|a|$, and $(2r-1)^{\ell-2\lceil \lambda\ell\rceil}$ choices for the remaining letters of the word $w$.
Thus, we obtain
\[
    2\ell(\ell-2\lceil \lambda\ell\rceil-2)
    \FR(r,\lceil \lambda\ell\rceil)
    (2r-1)^{\ell-2\lceil \lambda\ell\rceil}
\]
as an upper bound on the number of such words counted in this case.

Thus, by combining our two previous bounds we obtain our result.
\end{proof}

\begin{cor}%
	\label{cor:rough-bound-on-NC1}
	We have the upper bounds
	\[
		\NC_\lambda^1(r,\ell)
		\leqslant
		4r\ell^2
		(2r-1)^{\ell-\lambda\ell-1}
	\]
	and
	\[
		\NC_\lambda^1(r,\ell_1,\ell_2)
		\leqslant
		4r\ell_2^2 (\ell_2-\ell_1+1)
		(2r-1)^{\ell_2-\lambda\ell_2-1}
	\]
	for each $r \geqslant 2$.
\end{cor}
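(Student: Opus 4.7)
The plan is to substitute the explicit formula for $\FR$ into the first term of Lemma~\ref{lemma:accurate-approximation-of-NC1}, bound the summation in the second term crudely, and then collect everything over a common power of $(2r-1)$. Using $\FR(r,\lceil\lambda\ell\rceil) = 2r(2r-1)^{\lceil\lambda\ell\rceil - 1}$, the first term rewrites as
\[
    4r\ell(\ell - 2\lceil\lambda\ell\rceil - 2)(2r-1)^{\ell - \lceil\lambda\ell\rceil - 1}.
\]

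For the second term, I would use the elementary estimate $\CR(r,k) \leqslant 2(2r-1)^k$, which holds for all $r\geqslant 2$ and $k \geqslant 1$ because $\CR(r,k) = (2r-1)^k + 1 + (r-1)(1+(-1)^k) \leqslant (2r-1)^k + (2r-1)$. Plugging this in makes the summand independent of $k$, so the $k$-sum collapses to its number of terms, yielding the upper bound $2\ell\lceil\lambda\ell\rceil(2r-1)(2r-1)^{\ell-\lceil\lambda\ell\rceil - 1}$ on the second term.

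Combining the two pieces over the common factor $(2r-1)^{\ell - \lceil\lambda\ell\rceil - 1}$, the total coefficient simplifies to $\ell\bigl[4r\ell - 8r - (4r+2)\lceil\lambda\ell\rceil\bigr]$, which is trivially bounded above by $4r\ell^2$. Relaxing the exponent via $\lceil\lambda\ell\rceil \geqslant \lambda\ell$ then yields the first inequality of the corollary. For the second inequality, I would sum the pointwise bound over $\ell \in \{\ell_1, \ldots, \ell_2\}$ and observe that $4r\ell^2(2r-1)^{\ell - \lambda\ell - 1}$ is monotonically increasing in $\ell$ (both $\ell^2$ and $(2r-1)^{(1-\lambda)\ell}$ grow with $\ell$ for $\lambda \leqslant 1$), so each of the $\ell_2 - \ell_1 + 1$ summands is bounded by its value at $\ell = \ell_2$.

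The only obstacle is bookkeeping: the crude geometric estimate on the second term must not inflate the leading coefficient beyond $4r$. This is precisely why one extracts the common factor $(2r-1)^{\ell-\lceil\lambda\ell\rceil - 1}$ before combining, so that the factor of $(2r-1)$ arising from the $\CR$ bound is absorbed into the shared exponent; the contribution $-(4r+2)\lceil\lambda\ell\rceil$ is then \emph{subtracted} from $4r\ell$, and the target coefficient $4r\ell^2$ survives exactly.
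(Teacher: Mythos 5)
Your proof is correct and follows essentially the same route as the paper: substitute $\FR(r,\lceil\lambda\ell\rceil)=2r(2r-1)^{\lceil\lambda\ell\rceil-1}$ into the first term of Lemma~\ref{lemma:accurate-approximation-of-NC1}, crudely bound $\CR(r,k)$ in the sum, collect both pieces over $(2r-1)^{\ell-\lceil\lambda\ell\rceil-1}$, absorb the coefficient into $4r\ell^2$, relax the exponent via $\lceil\lambda\ell\rceil\geqslant\lambda\ell$, and finish the ranged bound by monotonicity of the summand. The only cosmetic difference is that you use $\CR(r,k)\leqslant 2(2r-1)^k$ so the $k$-sum collapses to $\lceil\lambda\ell\rceil$ identical terms, whereas the paper keeps $\CR(r,k)\leqslant(2r-1)^k+(2r-1)$ and bounds the resulting geometric tail by $\lceil\lambda\ell\rceil+2$; both land on the same final estimate.
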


\begin{proof}
Applying the upper bound
\[
	\CR(r,\ell) \leqslant (2r-1)^\ell + 2r-1
\]
to the inequality given in Lemma~\ref{lemma:accurate-approximation-of-NC1} we obtain
\begin{multline*}
	\NC_\lambda^1(r,\ell)
	\leqslant
	2\ell(\ell-2\left\lceil\lambda\ell\right\rceil-2)
	\cdot
	\FR(r,\lceil\lambda\ell\rceil)
	\cdot
	(2r-1)^{\ell-2\left\lceil\lambda\ell\right\rceil}
	\\
	+
	\sum_{k=1}^{\left\lceil\lambda\ell\right\rceil}
	\ell
	\left[(2r-1)^k + 2r-1\right]
	(2r-1)^{\ell-\left\lceil\lambda\ell\right\rceil -k}.
\end{multline*}
After some rearrangement, we obtain
\[
	\NC_\lambda^1(r,\ell)
	\leqslant
	4r\ell(\ell-2\left\lceil\lambda\ell\right\rceil-2)
	(2r-1)^{\ell-\left\lceil\lambda\ell\right\rceil-1}
	+
	\ell (2r-1)^{\ell-\left\lceil\lambda\ell\right\rceil}
	\sum_{k=1}^{\left\lceil\lambda\ell\right\rceil}
	\left[1 + (2r-1)^{1-k}\right].
\]
From this, we can then obtain  the upper bound
\[
	\NC_\lambda^1(r,\ell)
	\leqslant
	4r\ell(\ell-2\left\lceil\lambda\ell\right\rceil-2)
	(2r-1)^{\ell-\lambda\ell-1}
	+
	\ell (2r-1)^{\ell-\lambda\ell}
	\left(
		\left\lceil\lambda\ell\right\rceil
		+
		2
	\right).
\]
Thus,
\[
	\NC_\lambda^1(r,\ell)
	\leqslant
	\left[
		4r(\ell-2\left\lceil\lambda\ell\right\rceil-2)
		+
		(2r-1)(\left\lceil\lambda\ell\right\rceil+2)
	\right]
	\ell(2r-1)^{\ell-\lambda\ell-1}.
\]
From this upper bound, we can then obtain our bound
\[
	\NC_\lambda^1(r,\ell)
	\leqslant
	4r\ell^2
	(2r-1)^{\ell-\lambda\ell-1}.
\]
Then, using the bound $\sum_{\ell=\ell_1}^{\ell_2} \ell^2 a^\ell \leqslant \ell_2^2 (\ell_2-\ell_1+1) a^{\ell_2}$ for each $a \geqslant 1$ and $1 \leqslant \ell_1 \leqslant \ell_2$, we obtain our bound on $\NC^1_\lambda(r,\ell_1,\ell_2)$.
\end{proof}

\begin{lem}%
\label{lemma:accurate-approximation-of-NC2}
	We have the upper bound
	\[
		\NC_\lambda^2(r,\ell_1,\ell_2)
		\leqslant
		\sum_{j_1=\ell_1}^{\ell_2}
		\sum_{j_2=\ell_1}^{\ell_2}
		2 j_1 j_2
		\FR(r,\left\lceil\lambda\cdot \min(j_1,j_2)\right\rceil)
		(2r-1)^{j_1+j_2-2\left\lceil\lambda\cdot \min(j_1,j_2)\right\rceil}.
	\]
\end{lem}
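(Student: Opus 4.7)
The plan is to mirror the strategy used in Lemma~\ref{lemma:accurate-approximation-of-NC1}, but now applied to two distinct relators; the counting is actually easier here, because there is no possibility that a common prefix between cyclic permutations of a single word ``self-overlaps'' within one word. I would fix the lengths $j_1, j_2 \in \{\ell_1, \dots, \ell_2\}$ of the two words $w_1, w_2$, bound the number of pairs $(w_1, w_2)$ with $|w_1| = j_1$, $|w_2| = j_2$ which satisfy $\NC_\lambda^2$, and then sum over all such $(j_1, j_2)$. The resulting double sum is exactly the right-hand side of the stated inequality.

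For fixed $j_1, j_2$, set $k = \lceil \lambda \cdot \min(j_1, j_2)\rceil$. A pair $(w_1, w_2)$ witnessing $\NC_\lambda^2$ gives rise to a tuple consisting of two cyclic offsets $d_1 \in \{0, 1, \dots, j_1-1\}$ and $d_2 \in \{0, 1, \dots, j_2 - 1\}$, a sign $\epsilon \in \{\pm 1\}$ specifying whether $x = y$ or $x = y^{-1}$, and the common freely reduced word $x$ of length exactly $k$ appearing as the prefix of both $(w_1)_{\ll d_1}$ and $((w_2)_{\ll d_2})^{\epsilon}$. Since we are after an upper bound, it suffices to restrict to a prefix of length exactly $k$ (rather than possibly longer) and to allow arbitrary extensions afterwards.

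Thus, for each such tuple, I would count the number of pairs $(w_1, w_2)$ that realise it: there are $j_1 j_2$ choices for $(d_1, d_2)$, $2$ choices for $\epsilon$, $\FR(r, k)$ choices for the word $x$ of length $k$, and then at most $(2r-1)^{j_1 - k}$ and $(2r-1)^{j_2 - k}$ choices for the remaining letters of the cyclic permutations $(w_1)_{\ll d_1}$ and $(w_2)_{\ll d_2}$, respectively (each additional letter of a freely reduced word has at most $2r-1$ possibilities given the previous letter). Multiplying these yields the summand
\[
    2 j_1 j_2 \FR(r, k) (2r-1)^{j_1 + j_2 - 2k},
\]
and summing over $\ell_1 \leqslant j_1, j_2 \leqslant \ell_2$ gives the claimed bound.

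The only mildly delicate point is that the same pair $(w_1, w_2)$ may be counted several times (via different witnessing tuples $(d_1, d_2, \epsilon, x)$), and that bounding the number of freely reduced extensions by $(2r-1)^{j_i - k}$ is itself a small over-approximation (it ignores the constraint that the resulting cyclic permutation must be cyclically reduced rather than just freely reduced). Both of these issues only strengthen the inequality, so they are harmless for an upper bound; this is where the inequality slack comes from, and I expect no real obstacle beyond setting up the enumeration cleanly.
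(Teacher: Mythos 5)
Your proposal is correct and follows essentially the same argument as the paper: fix the lengths $j_1,j_2$, count witnessing tuples (two offsets, a sign for $x=y^{\pm1}$, the length-$\lceil\lambda\min(j_1,j_2)\rceil$ freely reduced common prefix, and the remaining letters bounded by powers of $2r-1$), multiply, and sum over $j_1,j_2$. The remarks about multiple counting and the cyclically-reduced constraint only tightening the bound match the paper's implicit use of the same slack.
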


\begin{proof}
Suppose that we choose two cyclically reduced words $v,w\in F_r$ of lengths $j_1$ and $j_2$, respectively, where $\ell_1 \leqslant j_i \leqslant \ell_2$ for each $j_i$.
Then for the pair of words $v,w$ to satisfy property $\NC_\lambda^2$ there must be cyclic permutations $v' = {v}_{\ll d_1}$ and $w'=w_{\ll d_2}$ that factor as $v' = xa$ and $w' = yb$ where $a,b,x,y\in F_r$, $x=y^{\pm 1}$ and $|x| = \lceil\lambda\cdot\min(j_1,j_2)\rceil$.

Thus, we have $j_1$ possible choices for the offset $d_1$, $j_2$ possible choices for the offset $d_2$, at most $2\cdot\FR(r,\lceil\lambda\cdot\min(j_1,j_2)\rceil)$ possible choices for the pair of words $x$ and $y$, at most $(2r-1)^{j_1 - \lceil\lambda\cdot\min(j_1,j_2)\rceil}$ possible choices for the word $a$, and at most $(2r-1)^{j_2-\lceil\lambda\cdot\min(j_1,j_2)\rceil}$ possible choices for the word $b$.
Hence, we have an upper bound of
\[
    2 j_1 j_2 \FR(r,\lceil\lambda\cdot\min(j_1,j_2)\rceil)
    (2r-1)^{
        j_1+j_2-2\lceil\lambda\cdot\min(j_1,j_2)\rceil
    }
\]
for the number of pairs $v$ and $w$, as before, satisfying property $\NC_\lambda^2$.

Thus, by summing over $j_1$ and $j_2$ from $\ell_1$ to $\ell_2$, we obtain our bound.
\end{proof}

\begin{cor}%
	\label{cor:rough-upper-bound-on-NC2}
	We have the upper bounds
	\[
		\NC_\lambda^2(r,\ell)
		\leqslant
		4r\ell^2(2r-1)^{2\ell-\lambda\ell-1}
	\]
	and
	\[
		\NC_\lambda^2(r,\ell_1,\ell_2)
		\leqslant
		16r\ell_2^2 (\ell_2-\ell_1+1) (2r-1)^{2\ell_2-\lambda\ell_2-1}.
	\]
\end{cor}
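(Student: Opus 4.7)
The plan is to follow the same template as the proof of Corollary~\ref{cor:rough-bound-on-NC1}: substitute the closed form $\FR(r,k) = 2r(2r-1)^{k-1}$ into the bound of Lemma~\ref{lemma:accurate-approximation-of-NC2} to obtain
\[
    \NC_\lambda^2(r,\ell_1,\ell_2)
    \leqslant
    \sum_{j_1=\ell_1}^{\ell_2}
    \sum_{j_2=\ell_1}^{\ell_2}
    4r j_1 j_2 (2r-1)^{j_1+j_2-\lceil\lambda\min(j_1,j_2)\rceil-1},
\]
and then estimate the resulting double sum.

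The first inequality is the special case $\ell_1 = \ell_2 = \ell$: the sum collapses to a single term at $j_1 = j_2 = \ell$ (where $\min = \ell$), and using $\lceil\lambda\ell\rceil \geqslant \lambda\ell$ in the exponent immediately yields the stated $4r\ell^2(2r-1)^{2\ell-\lambda\ell-1}$.

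For the range version, the key step is to break the symmetry of $\min(j_1,j_2)$ by restricting to $j_1 \leqslant j_2$ and doubling to compensate. On this region the exponent becomes $j_1 + j_2 - \lceil\lambda j_1\rceil - 1$; after bounding the polynomial prefactor $j_1 j_2 \leqslant \ell_2^2$, the inner variable $j_2$ enters only through the pure geometric factor $(2r-1)^{j_2}$. Since $2r-1 \geqslant 3$ we have $\sum_{j_2=j_1}^{\ell_2}(2r-1)^{j_2} \leqslant \tfrac{3}{2}(2r-1)^{\ell_2}$, so this inner sum contributes only a bounded constant rather than a factor of $(\ell_2-\ell_1+1)$. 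The surviving outer sum is (up to constants) of the form $\sum_{j_1}(2r-1)^{(1-\lambda)j_1}$, which is monotone increasing in $j_1$ since $\lambda < 1$, and so is bounded by $(\ell_2-\ell_1+1)$ times its value at $j_1 = \ell_2$. Collecting these estimates and using $\lceil\lambda\ell_2\rceil \geqslant \lambda\ell_2$ will produce an upper bound of the form $Cr\ell_2^2(\ell_2-\ell_1+1)(2r-1)^{2\ell_2-\lambda\ell_2-1}$ with $C$ an absolute constant comfortably at most $16$.

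The main obstacle is precisely this $\min(j_1,j_2)$ dependence in the exponent: a crude ``every summand is at most the maximum'' bound would introduce $(\ell_2-\ell_1+1)^2$ copies of the maximum, one factor of $(\ell_2-\ell_1+1)$ more than the target. The geometric-series argument in the inner variable recovers exactly this missing factor, and is really the only non-routine ingredient in the computation.
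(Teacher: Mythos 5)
Your proposal is correct and follows essentially the same route as the paper's proof: the first bound is the immediate single-term substitution of $\FR(r,k)=2r(2r-1)^{k-1}$, and the second uses exactly the paper's symmetrization (restrict to $j_1\leqslant j_2$ and double) followed by a geometric bound on the inner $j_2$-sum and the $(\ell_2-\ell_1+1)$-term bound on the outer sum. Your constants ($\tfrac{3}{2}$ for the inner geometric series versus the paper's $2$) differ harmlessly, and both land within the stated factor of $16$.
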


\begin{proof}

From the bound in Lemma~\ref{lemma:accurate-approximation-of-NC2} and $\FR(r,\ell) = 2r(2r-1)^{\ell-1}$ we immediately obtain the upper bound
\[
	\NC_\lambda^2(r,\ell)
	\leqslant
	4r\ell^2(2r-1)^{2\ell-\lambda\ell-1}.
\]

To derive our second bound, we rewrite the bound in Lemma~\ref{lemma:accurate-approximation-of-NC2} to obtain
\[
	\NC_\lambda^2(r,\ell_1,\ell_2)
	\leqslant
	2
	\sum_{j_1=\ell_1}^{\ell_2}
	\sum_{j_2=j_1}^{\ell_2}
	2 j_1 j_2
	\FR(r,\left\lceil\lambda j_1\right\rceil)
	(2r-1)^{j_1+j_2-2\left\lceil\lambda j_1\right\rceil}.
\]
We then see that we have the upper estimate
\[
	\NC_\lambda^2(r,\ell_1,\ell_2)
	\leqslant
	8r\ell_2^2(2r-1)^{-1}
	\sum_{j_1=\ell_1}^{\ell_2}
	(2r-1)^{j_1-\lambda j_1}
	\sum_{j_2=j_1}^{\ell_2}
	(2r-1)^{j_2}
	.
\]
Since $(2r-1) > 2$, we have $\sum_{j_2=j_1}^{\ell_2} (2r-1)^{j_2} \leqslant 2(2r-1)^{\ell_2}$, and thus we have
\[
	\NC_\lambda^2(r,\ell_1,\ell_2)
	\leqslant
	16r\ell_2^2(2r-1)^{\ell_2-1}
	\sum_{j_1=\ell_1}^{\ell_2}
	(2r-1)^{j_1-\lambda j_1}
	.
\]
Then, using the bound $\sum_{j_1=\ell_1}^{\ell_2} a^{j_1} \leqslant (\ell_2-\ell_1+1) a^{\ell_2}$ for each $a \geqslant 1$, we have
\[
	\NC_\lambda^2(r,\ell_1,\ell_2)
	\leqslant
	16r\ell_2^2 (\ell_2-\ell_1+1) (2r-1)^{2\ell_2-\lambda\ell_2-1}
\]
as required.
\end{proof}

Using the bounds obtained in this section, we prove Theorem~\ref{thm:lower} as follows.

\begin{proof}[Proof of Theorem~\ref{thm:lower}]
	
Combining the bounds in Lemmas~\ref{lemma:accurate-approximation-of-NC1} and \ref{lemma:accurate-approximation-of-NC2} with the inequality~(\ref{eq:overall-bound}) we obtain a lower bound $p^\leqslant_\lambda(r,\ell_1,\ell_2,m)$ on the probability of small cancellation.
That is, we have $p^\leqslant_\lambda(r,\ell_1,\ell_2,m) \leqslant p_\lambda(r,\ell_1,\ell_2,m)$.

From the upper bound
\[
	\CR(r,\ell_1,\ell_2) \geqslant (2r-1)^{\ell_2}
\]
and the bounds in Corollaries~\ref{cor:rough-bound-on-NC1} and \ref{cor:rough-upper-bound-on-NC2}, we obtain the bound
\begin{multline*}
	1 - p^\leqslant_\lambda(r,\ell_1,\ell_2,m)
	\leqslant
	\frac{m}{(2r-1)^{\ell_2}}\cdot
	4r\ell_2^2 (\ell_2-\ell_1+1)
	(2r-1)^{\ell_2-\lambda\ell_2-1}\\
	+
	\frac{m(m-1)}{2(2r-1)^{2\ell_2}}
	\cdot
	16r\ell_2^2 (\ell_2-\ell_1+1) (2r-1)^{2\ell_2-\lambda\ell_2-1}.
\end{multline*}
Thus, we obtain the upper bound
\[
	1 - p^\leqslant_\lambda(r,\ell_1,\ell_2,m)
	\leqslant
	8 m^2 r\ell_2^2 (\ell_2-\ell_1+1) (2r-1)^{-\lambda\ell_2-1}
\]
as required.
\end{proof}

\subsection{Upper bounds}%
\label{sec:upper-bound}

In this section, we present an upper bound on the probability $p_\lambda(r,\ell,\ell,m)$ in Proposition~\ref{prop:upper-bound-on-probability} below.

\begin{prop}%
\label{prop:upper-bound-on-probability}
	If $\FR(r,\lceil\lambda\ell\rceil) < 2m\ell$, then $p_\lambda(r,\ell,\ell,m)=0$; otherwise
	\[
	\frac{1}
		{\CR(r,\ell)^m}
		\prod_{i=1}^{m}
		\min\Bigg[
			\omega_i(r,\ell,m),\ 
		\beta(r,\ell,m)
		\cdot
		\prod_{k=1}^{\ell_1}
		\min
		\bigg(
			(2r-1)^{\lceil\lambda\ell\rceil},\,
			\alpha_{i,k}(r,\ell,m)
		\bigg)
		\Bigg]
	\]
	is an upper bound for $p_\lambda(r,\ell,\ell,m)$ where
	\begin{align*}
		\omega_i(r,\ell,m)
		&=
			\CR(r,\ell)
			-
			4(i-1)\ell(r-1)(2r-1)^{\ell-\lceil\lambda\ell\rceil - 1},
		\\
		\beta(r,\ell,m)
		&=
			\FR(r,\ell_2),
		\\
		\alpha_{i,1}(r,\ell,m)
		&=
			\FR(r,\lceil\lambda\ell\rceil) - 2(i-1)\ell \mathrm{\ \ and}
		\\
		\alpha_{i,k}(r,\ell,m)
		&=
			\FR(r,\lceil\lambda\ell\rceil)
			-
			2(i-1)\ell
			-
			2\Big(
				(k-2)\lceil\lambda\ell\rceil
				+
				\ell_2
				+
				1
			\Big)
	\end{align*}
	for each $i \geqslant 1$, $k \geqslant 2$ and $\ell = \ell_1\lceil \lambda\ell \rceil + \ell_2$ with $\ell_1,\ell_2\in \mathbb{N}$ and $0 \leqslant \ell_2 < \lceil \lambda\ell \rceil$.
\end{prop}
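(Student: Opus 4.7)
The strategy is to upper-bound the cardinality of the set of admissible tuples $(w_1,\ldots,w_m) \in F_r^m$ of cyclically reduced length-$\ell$ relators yielding a power-free $C'(\lambda)$ presentation, then divide by $\CR(r,\ell)^m$. Relators are selected greedily: at step $i$, for any valid prefix $(w_1,\ldots,w_{i-1})$, I produce two upper bounds on the number of admissible continuations $w_i$, retain the smaller, and multiply over $i$. The arithmetic obstruction $\FR(r,\lceil\lambda\ell\rceil) < 2m\ell$ is dispatched at once: in a power-free $C'(\lambda)$ presentation, two distinct elements of $W^S$ share a prefix of length at most $\lceil\lambda\ell\rceil - 1$, so the $2m\ell$ length-$\lceil\lambda\ell\rceil$ prefixes of elements of $W^S$ are pairwise distinct freely reduced words, forcing $\FR(r,\lceil\lambda\ell\rceil) \geqslant 2m\ell$; otherwise $p_\lambda(r,\ell,\ell,m) = 0$.

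The bound $\omega_i$ arises from a lower bound on invalid choices. Because $(w_1,\ldots,w_{i-1})$ is power-free and $C'(\lambda)$, the $2(i-1)\ell$ length-$\lceil\lambda\ell\rceil$ prefixes of $\bigcup_{j<i}(w_j)^S$ are pairwise distinct. For each such forbidden prefix $p$, the number of cyclically reduced words $w = pv$ of length $\ell$ is at least $2(r-1)(2r-1)^{\ell-\lceil\lambda\ell\rceil-1}$: the interior letters of $v$ contribute $2r-1$ choices each, and the final letter has $2r-2$ choices to avoid cyclic cancellation with the first letter of $p$. Distinct prefixes $p$ yield disjoint extensions (a word determines its length-$\lceil\lambda\ell\rceil$ prefix), so at least $4(i-1)\ell(r-1)(2r-1)^{\ell-\lceil\lambda\ell\rceil-1}$ cyclically reduced words would violate $C'(\lambda)$ with an earlier relator, giving $\omega_i$ as an upper bound on valid $w_i$.

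The refined bound uses the decomposition $w_i = B_1 B_2 \cdots B_{\ell_1} T$ with $|B_k| = \lceil\lambda\ell\rceil$ and $|T| = \ell_2$, bounding the choices for each block sequentially. Each block $B_k$ is bounded by the minimum of the trivial freely-reduced count $(2r-1)^{\lceil\lambda\ell\rceil}$ (which reflects $2r-1$ choices per letter subject to free reduction) and $\alpha_{i,k}$, the latter obtained from $\FR(r,\lceil\lambda\ell\rceil)$ by subtracting all length-$\lceil\lambda\ell\rceil$ prefixes in $W^S$ that are determined before $B_k$ is chosen: $2(i-1)\ell$ from $\bigcup_{j<i}(w_j)^S$, together with $2((k-2)\lceil\lambda\ell\rceil + \ell_2 + 1)$ from cyclic rotations of $w_i$ and their inverses whose length-$\lceil\lambda\ell\rceil$ prefixes are pinned down by $B_1 \cdots B_{k-1}$---the $\ell_2 + 1$ summand tracking wrap-around rotations that cross the positionally-fixed but unchosen tail. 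The tail $T$ itself contributes a final factor $\beta = \FR(r,\ell_2)$ of freely reduced choices.

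The main obstacle is the combinatorial bookkeeping underlying the count $2((k-2)\lceil\lambda\ell\rceil + \ell_2 + 1)$ in $\alpha_{i,k}$: one must enumerate exactly which rotations of $w_i$---inverted or not, wrapping around or not---have their length-$\lceil\lambda\ell\rceil$ prefix fully forced by $B_1 \cdots B_{k-1}$, avoiding double counting and correctly handling the periodic boundary where the tail sits. Once this count is verified, assembling the claimed product is routine: take the per-step minimum of the $\omega_i$-bound and the block-product bound, multiply over $i$ from $1$ to $m$, and divide by $\CR(r,\ell)^m$ to obtain the stated upper bound on $p_\lambda(r,\ell,\ell,m)$.
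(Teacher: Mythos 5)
Your proposal follows essentially the same route as the paper: the same dichotomy on $\FR(r,\lceil\lambda\ell\rceil)$ versus $2m\ell$, the same greedy choice of relators with the two competing per-step bounds $\omega_i$ and the block-product bound, and the same minimum-then-product assembly. The one substantive difference is that you place the short block of length $\ell_2$ at the \emph{end} of $w_i$, whereas the paper factors $w_i = b_i\,a_{i,1}\cdots a_{i,\ell_1}$ with the short factor $b_i$ \emph{first}; with the paper's ordering the portion chosen before step $k$ is a contiguous prefix of length $\ell_2+(k-1)\lceil\lambda\ell\rceil$, so the number of fully determined length-$\lceil\lambda\ell\rceil$ subwords of $(b_i a_{i,1}\cdots a_{i,k-1})^{\pm1}$ is immediately $2\bigl((k-2)\lceil\lambda\ell\rceil+\ell_2+1\bigr)$, which is precisely the bookkeeping you flag as the unresolved obstacle (and which your tail-at-the-end convention, with its wrap-around over an unchosen tail, would not deliver cleanly). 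Reordering the factorisation as in the paper closes that gap and the rest of your argument goes through as written.
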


\begin{proof}
Let $\mathcal{P} = \left\langle X\,|\, W \right\rangle$ be a presentation such that $r = |X|$, $m = |W|$ and each word in the list $W$ is cyclically reduced with length $\ell$.
We write $(w_1,w_2,\ldots,w_m) = W$ for the list of relators, and the length as $\ell=\ell_1\lceil\lambda\ell\rceil+\ell_2$ where $\ell_1,\ell_2 \in \mathbb{N}$ and $0\leqslant \ell_2 < \lceil\lambda\ell\rceil$.
We factor each relator $w_i$ as
\begin{equation}%
\label{eq:w_i-factored}
	w_i
	=
	b_i\,
	a_{i,1}\,
	a_{i,2}\, a_{i,3}\, a_{i,4}\,
	\cdots\, 
	a_{i,\ell_1}
\end{equation}
where each $|a_{i,k}| = \lceil\lambda\ell\rceil$ and $|b_i| = \ell_2$.

If $\mathcal{P}$ satisfies property $\C'(\lambda)$, then each word of the form $(w_i^{\pm 1})_{\ll d}$, with $0 \leqslant d < \ell$, has a distinct length $\lceil \lambda\ell\rceil$ prefix.
Thus, if $\FR(r,\lceil\lambda\ell\rceil) < 2m\ell$, then $p_\lambda(r,\ell,\ell,m)=0$ as there would be no choice for these $2m\ell$ distinct prefixes.
Thus, in the remainder of this proof, we will assume that $\FR(r,\lceil\lambda\ell\rceil) \geqslant 2m\ell$ which also implies that
\begin{equation}%
	\label{eq:upper-bound-on-probability/bound-on-cr}
	\CR(r,\ell) - 4m\ell(r-1)(2r-1)^{\ell-\lceil\lambda\ell\rceil-1} \geqslant 0
\end{equation}
as each such freely reduced word is the prefix of at least \[(2r-2)(2r-1)^{\ell-\lceil \lambda\ell\rceil - 1}\] cyclically reduced words.
Thus, all that remains is to establish our upper bound.

In the remainder of this proof, we place an upper bound on the number of choices for $W$ which result in $\mathcal{P}$ having the small cancellation property $\C'(\lambda)$.
In particular, we will describe a process of choosing relators such that the resulting presentation satisfies property $\C'(\lambda)$.

Suppose that we have already chosen the relators $w_1, w_2, \ldots, w_{i-1}$ in the presentation.
Then, we derive an upper bound on the number of choices for the relator $w_i$ for which the presentation may satisfy property $\C'(\lambda)$.

For $\mathcal{P}$ to satisfy property $\C'(\lambda)$, the length $\lceil\lambda\ell\rceil$ prefix of $w_i$ must be distinct from each length $\lceil\lambda\ell\rceil$ prefix of $(w_j^{\pm 1})_{\ll d}$, where $1 \leqslant j < i$ and $0 \leqslant d < \ell$, which must themselves be pairwise distinct.
Thus, we find that there are $2(i-1)$ prefixes that need to be avoided when choosing the relator.
Moreover, since there are $(2r-2)(2r-1)^{\ell-\lceil\lambda\ell\rceil-1}$ cyclically reduced words corresponding to each avoided prefix, there are at most
\[
    \omega_{i}(r,\ell,m)
    =
    \CR(r,\ell)
    - 4(i-1)\ell(r-1)(2r-1)^{\ell-\lceil\lambda\ell\rceil-1}
\]
choices for the word $w_i$; and
from (\ref{eq:upper-bound-on-probability/bound-on-cr}) we know $w_{i}(r,\ell,m)$ is non-negative.

Now consider the word $w_i$ as written in (\ref{eq:w_i-factored}); we will now place another upper bound on the number of choices for the word $w_i$ by deriving an upper bound on the number of choices for each of its factors.
Firstly, since $w_i$ is cyclically reduced, there are no more than $\beta(r,\ell,m) = \FR(r,\ell_2)$ choices for the factor $b_i$, and no more than $(2r-1)^{\lceil\lambda\ell\rceil}$ choices for each factor of the form $a_{i,j}$.
Moreover, since $a_{i,1}$ must be freely reduced and distinct from each length $\lceil\lambda\ell\rceil$ prefix of some $(w_j^{\pm 1})_{\ll d}$, with $1\leqslant j < i$ and $0\leqslant d < \ell$, we find that there can be at most
\[
    \alpha_{i,1}(r,\ell,m)
    =
    \FR(r,\ell) - 2(i-1)\ell
\]
choices for the factor $a_{i,1}$.
Now suppose that we have made a choice for the factors $b_i a_{i,1} a_{i,2}\cdots a_{i,k-1}$ with $k\geqslant 2$; then the factor $a_{i,k}$ must also avoid each length $\lceil\lambda\ell\rceil$ subword of $(b_i a_{i,1} a_{i,2}\cdots a_{i,k-1})^{\pm 1}$.
Thus, there are at most
\[
    \alpha_{i,k}(r,\ell,m)
    =
	\FR(r,\lceil\lambda\ell\rceil)
	-
	2(i-1)\ell
	-
	2\Big(
		(k-2)\lceil\lambda\ell\rceil
		+
		\ell_2
		+
		1
	\Big)
\]
choices for the factor $a_{i,k}$.

Hence, after making a choice for the words $w_1, w_2, \ldots, w_{i-1}$, we find that there are no more than
\[
    \min\left[
		\omega_i(r,\ell,m),\ 
	\beta(r,\ell,m)
	\cdot
	\prod_{k=1}^{\ell_1}
	\min
	\bigg(
		(2r-1)^{\lceil\lambda\ell\rceil},\,
		\alpha_{i,k}(r,\ell,m)
	\bigg)
	\right]
\]
choices for the word $w_i$.

Thus, by combining our bounds for each $w_i$ we obtain our desired upper bound on the probability $p_\lambda(r,\ell,\ell,m)$.
\end{proof}

\begin{cor}%
	\label{cor:rough-upper-bound}
	If $\FR(r,\lceil \lambda \ell \rceil) \geqslant 2m\ell$, then
	\[
		p_\lambda(r,\ell,\ell,m)
		\leqslant
		\frac{1}{\CR(r,\ell)^{m'}}
		\left(
		\CR(r,\ell)
		- 4 m' \ell(r-1)(2r-1)^{\ell-\lceil\lambda\ell\rceil-1}
		\right)^{m'}
	\]
	where $m' = \lfloor m/2 \rfloor$.
\end{cor}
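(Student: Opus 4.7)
The plan is to start from the upper bound in Proposition~\ref{prop:upper-bound-on-probability} and keep only the first entry of each $\min$. That is, for every $i$ bound the $i$-th factor by $\omega_i(r,\ell,m)$ alone, which immediately gives
\[
    p_\lambda(r,\ell,\ell,m)
    \leqslant
    \frac{1}{\CR(r,\ell)^{m}}
    \prod_{i=1}^{m} \omega_i(r,\ell,m).
\]
Note that the hypothesis $\FR(r,\lceil\lambda\ell\rceil)\geqslant 2m\ell$ is exactly what guarantees, via inequality~(\ref{eq:upper-bound-on-probability/bound-on-cr}), that every $\omega_i$ is nonnegative, so such estimates preserve the inequality.

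Next I would exploit monotonicity. Observe that $i \mapsto \omega_i(r,\ell,m)$ is weakly decreasing (each new relator requires avoiding more forbidden prefixes), and that $\omega_1(r,\ell,m) = \CR(r,\ell)$. With $m' = \lfloor m/2 \rfloor$, I split the product into the first $m-m'$ factors and the last $m'$ factors. Bound each of the first $m-m'$ factors by $\omega_1 = \CR(r,\ell)$, and bound each of the last $m'$ factors by $\omega_{m-m'+1}(r,\ell,m)$, the largest among them.

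The key arithmetic point is that $m' = \lfloor m/2 \rfloor$ forces $m - m' + 1 \geqslant m' + 1$, so by monotonicity $\omega_{m-m'+1}(r,\ell,m) \leqslant \omega_{m'+1}(r,\ell,m)$, and the latter equals $\CR(r,\ell) - 4m'\ell(r-1)(2r-1)^{\ell-\lceil\lambda\ell\rceil-1}$ by the definition of $\omega$. Multiplying these bounds together and dividing by $\CR(r,\ell)^m$ yields exactly the claimed inequality.

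There is no real obstacle here: the statement is a direct consequence of Proposition~\ref{prop:upper-bound-on-probability}, and the only subtlety is choosing the split so that the exponent on the decreasing quantity comes out to $m' = \lfloor m/2 \rfloor$ rather than $m$. Taking $m'$ rather than, say, $m-1$ trades a tighter constant for a cleaner closed form that will be convenient in Section~\ref{sec:relator-length} when comparing against the lower bound of Theorem~\ref{thm:lower}.
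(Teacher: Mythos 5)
Your argument is correct and takes essentially the same route as the paper: discard the second entry of each $\min$ in Proposition~\ref{prop:upper-bound-on-probability}, then use nonnegativity (via inequality~(\ref{eq:upper-bound-on-probability/bound-on-cr})) and the monotone decrease of $i\mapsto\omega_i(r,\ell,m)$ to reduce the product to $\bigl(\omega_{m'+1}(r,\ell,m)/\CR(r,\ell)\bigr)^{m'}$. Your bookkeeping is in fact marginally cleaner than the paper's, which drops the first $m'$ factors and leaves implicit that the remaining $\lceil m/2\rceil$ copies of a ratio in $[0,1]$ are still bounded by its $m'$-th power, whereas your split produces the exponent $m'$ directly.
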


\begin{proof}
	From Proposition~\ref{prop:upper-bound-on-probability}, we see that, if $\FR(r,\lceil \lambda \ell \rceil) \geqslant 2m\ell$, then
	\[
		p_\lambda(r,\ell,\ell,m)
		\leqslant
		\prod_{i=1}^m
		\frac{\omega_i(r,\ell,m)}{\CR(r,\ell)}
	\]
	where
	\[
		\omega_i(r,\ell,m)
		=
		\CR(r,\ell) - 4(i-1)\ell(r-1)(2r-1)^{\ell-\lceil\lambda\ell\rceil-1}.
	\]
	Then, since $0 \leqslant \omega_i(r,\ell,m) \leqslant \CR(r,\ell)$ where $1\leqslant i\leqslant m$, we see that
	\[
		p_\lambda(r,\ell,\ell,m)
		\leqslant
		\prod_{i=m'+1}^m
		\frac{\omega_i(r,\ell,m)}{\CR(r,\ell)}.
	\]
	Notice that $\omega_i(r,\ell,m) \leqslant \omega_{m'+1}(r,\ell,m)$ for each $i \geqslant m'+1$.
	We see that
	\[
		p_\lambda(r,\ell,\ell,m)
		\leqslant
		\left(\frac{\omega_{m'+1}(r,\ell,m)}{\CR(r,\ell)}\right)^{m'}.
	\]
	That is,
	\[
	p_\lambda(r,\ell,\ell,m)
	\leqslant
	\frac{1}{\CR(r,\ell)^{m'}}
	\left(
	\CR(r,\ell)
	- 4 m' \ell(r-1)(2r-1)^{\ell-\lceil\lambda\ell\rceil-1}
	\right)^{m'}
	\]
	as required.
\end{proof}

From Proposition~\ref{prop:upper-bound-on-probability}, given above, we have an upper bound $p^\geqslant_\lambda(r,\ell,m)$ such that $p^\geqslant_\lambda(r,\ell,m) \geqslant p_\lambda(r,\ell,\ell,m)$.
At the end of the following section, we will see that this upper bound is indeed the one described in Theorem~\ref{thm:upper}.

\section{Finding Limitations on the Parameters}%
\label{sec:probability level sets}

In this section, we derive several conditions for small cancellation to take place with a specified probability.
In particular, we show that, if we wish to have $p_\lambda(r,\ell_1,\ell_2,m)\geqslant p$ for some $p < 1$, then we can do so by either choosing $r$ or $\ell_2$ to be sufficiently large, or, if possible, by choosing $m$ to be sufficiently small.
Moreover, we establish an upper bound on the value of $m$ for small cancellation to occur with a given probability.
This section concludes with a proof of Theorem~\ref{thm:upper}.

\begin{prop}%
	\label{prop:various-bounds}
	If
	\begin{align*}
		\ell_2
		&\geqslant
			e\cdot
			\frac{
				\ln\left(8rm^2\right)
				-
				\ln(1-p)-\ln(2r-1)
			}{
				\lambda e\ln\left(2r-1\right)
				-
				3
			}
		\quad \text{or}\\
		r
		&\geqslant
			\left(
			\frac{8 m^2 \ell_2^2(\ell_2-\ell_1+1)}{1-p}
			\right)^{1/\lambda\ell_2}
	\end{align*}
	then we have $p \leqslant p^\leqslant_\lambda(r,\ell_1,\ell_2,m) \leqslant p_\lambda(r,\ell_1,\ell_2,m)$.
\end{prop}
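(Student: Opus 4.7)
The plan is to invoke the explicit bound
\[
    1 - p^{\leqslant}_\lambda(r,\ell_1,\ell_2,m)
    \leqslant
    8m^2 r \ell_2^2 (\ell_2-\ell_1+1)(2r-1)^{-\lambda\ell_2-1}
\]
from Theorem~\ref{thm:lower} and to show that each of the two stated hypotheses forces the right-hand side to be at most $1-p$. Since $p^{\leqslant}_\lambda(r,\ell_1,\ell_2,m) \leqslant p_\lambda(r,\ell_1,\ell_2,m)$ by construction, this immediately yields the desired conclusion.

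The case of the hypothesis on $r$ is a pure algebraic manipulation. The required inequality rearranges to $(2r-1)^{\lambda\ell_2+1} \geqslant \frac{8m^2 r \ell_2^2(\ell_2-\ell_1+1)}{1-p}$; using $2r-1 \geqslant r$ (valid since $r\geqslant 1$) to bound the left-hand side below by $r\cdot r^{\lambda\ell_2}$, the inequality reduces to $r^{\lambda\ell_2} \geqslant \frac{8m^2 \ell_2^2(\ell_2-\ell_1+1)}{1-p}$, which is precisely the stated hypothesis after raising to the $\lambda\ell_2$-th power.

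For the hypothesis on $\ell_2$, I would first absorb the polynomial factor using $\ell_1\geqslant 1$ to get $\ell_2-\ell_1+1\leqslant \ell_2$, so the target becomes $8m^2 r\ell_2^3 (2r-1)^{-\lambda\ell_2-1} \leqslant 1-p$. Taking natural logarithms and collecting terms reduces this to a condition of the form
\[
    \lambda\ell_2\ln(2r-1) - 3\ln\ell_2 \geqslant \ln(8rm^2) - \ln(1-p) - \ln(2r-1).
\]
The pivotal step is then to dominate $3\ln\ell_2$ by the linear function $3\ell_2/e$, using the elementary tangent-line inequality $\ln x \leqslant x/e$ (valid for all $x>0$, with equality at $x=e$). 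The resulting linear inequality in $\ell_2$, solved under the implicit assumption that $\lambda e\ln(2r-1)-3 > 0$, gives the threshold in the statement after multiplying numerator and denominator by $e$.

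The only delicate point I anticipate is the choice of concave majorant $\ln x\leqslant x/e$: a weaker majorant would spoil the constants, and a sharper one would not yield such a clean closed form. Everything else is routine rearrangement.
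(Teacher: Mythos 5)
Your proposal is correct and follows essentially the same route as the paper: both cases start from the bound $1-p^{\leqslant}_\lambda \leqslant 8m^2r\ell_2^2(\ell_2-\ell_1+1)(2r-1)^{-\lambda\ell_2-1}$ of Theorem~\ref{thm:lower}, the $r$-condition is obtained via $2r-1\geqslant r$ exactly as in the paper, and the $\ell_2$-condition via taking logarithms and the majorant $\ln x\leqslant x/e$. Your explicit remarks on needing $\ell_1\geqslant 1$ for $\ell_2-\ell_1+1\leqslant\ell_2$ and on the implicit sign condition $\lambda e\ln(2r-1)-3>0$ are points the paper passes over silently, but they do not change the argument.
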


\begin{proof}

We see that $p \leqslant p^\leqslant_\lambda(r,\ell_1,\ell_2,m)$ if $1-p \geqslant 1-p^\leqslant_\lambda(r,\ell_1,\ell_2,m)$.
Then, from Theorem~\ref{thm:lower}, we have the sufficient condition
\[
	1 - p
	\geqslant 8m^2 r\ell_2^3(2r-1)^{-\lambda\ell_2-1}.
\]
Then, taking the logarithm of both sides, we find that
\[
	\ln(1-p) \geqslant \ln(8m^2r) + 3\ln(\ell_2) + (-\lambda\ell_2-1)\ln(2r-1).
\]
Thus, after rearranging and using the bound $\ln(\ell_2) \leqslant \ell_2/e$, we obtain
\[
		\ell_2
		\geqslant
		e\cdot
		\frac{
			\ln\left(8rm^2\right)
			-
			\ln(1-p)-\ln(2r-1)
		}{
			\lambda e\ln\left(2r-1\right)
			-
			3
		}
\]
as a sufficient condition.

Again, from the bound in Theorem~\ref{thm:lower}, we see that, since $2r-1\geqslant r$, we obtain the sufficient bound
\[
	1-p \geqslant 8 m^2 \ell_2^2(\ell_2-\ell_1+1)r^{-\lambda\ell_2}.
\]
Then, after rearrangement, we obtain the bound
\[
	r
	\geqslant
	\left(
		\frac{8 m^2\ell_2^2(\ell_2-\ell_1+1)}{1-p}
	\right)^{1/\lambda\ell_2}
\]
as required.
\end{proof}

\begin{prop}%
\label{prop:sufficient bound on m}
	If $m$ is such that
	\[
		1
		\leqslant
		m
		\leqslant
		\sqrt{
			\frac{(1-p)(2r-1)^{1+\lambda\ell}}{8r\ell^2}
		},
	\]
	then $p \leqslant p^\leqslant_\lambda(r,\ell,\ell,m) \leqslant p_\lambda(r,\ell,\ell,m)$.
\end{prop}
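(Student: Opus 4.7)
The plan is to derive this directly from the lower bound in Theorem~\ref{thm:lower} specialised to $\ell_1 = \ell_2 = \ell$, mirroring the strategy used in Proposition~\ref{prop:various-bounds}. The goal is to convert the hypothesis $p \leqslant p^\leqslant_\lambda(r,\ell,\ell,m)$ into an inequality purely in $m$, which is just a matter of solving a quadratic bound.

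First I would observe that, since $p^\leqslant_\lambda(r,\ell,\ell,m) \leqslant p_\lambda(r,\ell,\ell,m)$ holds by construction (this is part of Theorem~\ref{thm:lower}), it suffices to verify the lower inequality $p \leqslant p^\leqslant_\lambda(r,\ell,\ell,m)$, which is equivalent to $1-p \geqslant 1-p^\leqslant_\lambda(r,\ell,\ell,m)$. Plugging $\ell_1 = \ell_2 = \ell$ into the explicit bound from Theorem~\ref{thm:lower} gives $\ell_2 - \ell_1 + 1 = 1$, so
\[
    1 - p^\leqslant_\lambda(r,\ell,\ell,m)
    \leqslant
    8 m^2 r \ell^2 (2r-1)^{-\lambda\ell - 1},
\]
and hence a sufficient condition for $p \leqslant p^\leqslant_\lambda(r,\ell,\ell,m)$ is
\[
    1 - p \geqslant 8 m^2 r \ell^2 (2r-1)^{-\lambda\ell - 1}.
\]

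Next I would isolate $m$: multiplying through by $(2r-1)^{\lambda\ell+1}/(8 r \ell^2)$ yields the equivalent inequality
\[
    m^2 \leqslant \frac{(1-p)(2r-1)^{1+\lambda\ell}}{8 r \ell^2},
\]
and taking square roots (both sides are non-negative since $p \leqslant 1$) gives exactly the stated upper bound on $m$. Combined with the hypothesis $m \geqslant 1$, this establishes the claim.

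There is no real obstacle here; the only thing to double-check is that the simplification $\ell_2-\ell_1+1=1$ in the bound of Theorem~\ref{thm:lower} is applied correctly and that the direction of the inequalities is preserved when taking square roots, which is fine since both sides are non-negative under the standing assumption $p < 1$. The proof will be only a few lines.
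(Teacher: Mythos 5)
Your proposal is correct and is essentially identical to the paper's own proof: both specialise the bound of Theorem~\ref{thm:lower} to $\ell_1=\ell_2=\ell$ (so the factor $\ell_2-\ell_1+1$ becomes $1$), impose the sufficient condition $1-p\geqslant 8m^2r\ell^2(2r-1)^{-\lambda\ell-1}$, and solve for $m$. The only difference is that you spell out the rearrangement and the square-root step, which the paper leaves to the reader.
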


\begin{proof}
From Theorem~\ref{thm:lower} may derive the sufficient condition
\[
1-p \geqslant 8m^2 r\ell^2(2r-1)^{-\lambda\ell-1}.
\]
Then, after some rearrangement, we obtain the desired result.
\end{proof}

From Proposition~\ref{prop:upper-bound-on-probability}, we may derive the following bound on $m$.

\begin{prop}%
\label{prop:necessary bound on m}
If we have $p^\geqslant_\lambda(r,\ell,m) \geqslant p > 0$, then
\[
    m
    \leqslant
    1+
    2
    \sqrt{
         \frac
            {\ln(1/p)(2r-1)^{1+\lceil\lambda\ell\rceil}}
            {2 \ell(r-1)}
    }.
\]
In particular, the above bound holds if $p^\geqslant_\lambda(r,\ell,m) \geqslant p_\lambda(r,\ell,\ell,m) \geqslant p > 0$.
\end{prop}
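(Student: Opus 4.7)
The plan is to leverage Corollary~\ref{cor:rough-upper-bound}, which collapses the per-relator factors in $p^\geqslant_\lambda$ into the single expression $(\omega_{m'+1}/\CR(r,\ell))^{m'}$ with $m' = \lfloor m/2 \rfloor$. Note first that the hypothesis $p^\geqslant_\lambda(r,\ell,m) \geqslant p > 0$ forces us into the nontrivial case of Proposition~\ref{prop:upper-bound-on-probability}, so the condition $\FR(r,\lceil\lambda\ell\rceil) \geqslant 2m\ell$ required by the corollary is automatic. Moreover, inspection of the proof of Corollary~\ref{cor:rough-upper-bound} shows that its bound is derived by discarding the other arguments of the inner $\min$ in Proposition~\ref{prop:upper-bound-on-probability}, so it is also an upper bound for $p^\geqslant_\lambda(r,\ell,m)$, not merely for $p_\lambda(r,\ell,\ell,m)$.

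Next, I would chain the inequalities to obtain
\[
    p \leqslant p^\geqslant_\lambda(r,\ell,m) \leqslant \left(1 - \frac{4m'\ell(r-1)(2r-1)^{\ell-\lceil\lambda\ell\rceil-1}}{\CR(r,\ell)}\right)^{m'},
\]
take logarithms of both sides, and apply the elementary inequality $\ln(1-x) \leqslant -x$ (valid since the parenthesised factor lies in $[0,1]$ by the corollary). This yields
\[
    \ln(1/p) \geqslant \frac{4(m')^2 \ell(r-1)(2r-1)^{\ell-\lceil\lambda\ell\rceil-1}}{\CR(r,\ell)}.
\]

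To match the constants in the statement I would then use the crude bound $\CR(r,\ell) \leqslant 2(2r-1)^\ell$, which holds for $r \geqslant 2$ and $\ell \geqslant 1$ (both parities of $\ell$ are checked directly from Rivin's formula). Substituting and simplifying reduces the previous inequality to
\[
    (m')^2 \leqslant \frac{\ln(1/p)\,(2r-1)^{1+\lceil\lambda\ell\rceil}}{2\ell(r-1)},
\]
after which $m \leqslant 2m'+1$ delivers the claimed bound. The ``in particular'' clause is immediate, since by definition $p^\geqslant_\lambda(r,\ell,m) \geqslant p_\lambda(r,\ell,\ell,m)$. The only mildly delicate point is verifying that Corollary~\ref{cor:rough-upper-bound}, stated as a bound on $p_\lambda$, also controls $p^\geqslant_\lambda$; everything else amounts to a logarithm, the inequality $\ln(1-x) \leqslant -x$, and algebraic rearrangement.
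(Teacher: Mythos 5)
Your proposal is correct and follows essentially the same route as the paper: Corollary~\ref{cor:rough-upper-bound}, a logarithm, the first-order bound $\ln(1-x)\leqslant -x$ (which the paper phrases via the Taylor series of $\ln(1-x)$), the estimate $\CR(r,\ell)\leqslant 2(2r-1)^{\ell}$, and $m\leqslant 2m'+1$. Your explicit check that the corollary's bound also controls $p^{\geqslant}_{\lambda}$ (since the inner minima are dropped in its derivation) is a point the paper's own proof glosses over, and is worth making.
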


\begin{proof}
Firstly, suppose that $\FR(r,\lceil\lambda\ell\rceil) < 2m\ell$; then $p_\lambda(r,\ell,\ell,m) = 0$ by Proposition~\ref{prop:upper-bound-on-probability} and thus our statement holds as there would be no such $p$.
In the remainder of this proof, we suppose that $\FR(r,\lceil\lambda\ell\rceil) \geqslant 2m\ell$ and thus we have the bound in Proposition~\ref{prop:upper-bound-on-probability}.

Then, from Corollary~\ref{cor:rough-upper-bound}, we have
\[
    p_\lambda(r,\ell,\ell,m)
    \leqslant
    \frac{1}{\CR(r,\ell)^{m'}}
    \left(
        \CR(r,\ell)
        - 4 m' \ell(r-1)(2r-1)^{\ell-\lceil\lambda\ell\rceil-1}
    \right)^{m'}
\]
where $m' = \lfloor m/2 \rfloor$.
After some rearrangement, if $p_\lambda(r,\ell,\ell,m)\geqslant p$, then
\[
    \left(
        1
        - m'
        \cdot
        \frac
            {4 \ell(r-1)(2r-1)^{\ell-\lceil\lambda\ell\rceil-1}}
            {\CR(r,\ell)}
    \right)^{m'}
    \geqslant
    p.
\]
Taking the logarithm of both sides we obtain
\[
    m'
    \cdot
    \ln
    \left(
        1
        - m'
        \cdot
        \frac
            {4 \ell(r-1)(2r-1)^{\ell-\lceil\lambda\ell\rceil-1}}
            {\CR(r,\ell)}
    \right)
    \geqslant
    \ln(p).
\]
We can thus apply the Taylor series for $\ln(1-x)$, to obtain
\[
    -m'\sum_{i=1}^{\infty} \frac{1}{i}
    \cdot
    \left(
        m'\cdot
        \frac
            {4 \ell(r-1)(2r-1)^{\ell-\lceil\lambda\ell\rceil-1}}
            {\CR(r,\ell)}
    \right)^i
    \geqslant
    \ln(p)
\]
as a necessary condition.

Hence, we can now see that $m'$ must satisfy
\[
    \left(
        m'
    \right)^2
    \cdot
    \frac
            {4 \ell(r-1)(2r-1)^{\ell-\lceil\lambda\ell\rceil-1}}
            {\CR(r,\ell)}
    \leqslant
    \ln(1/p),
\]
and thus,
\[
    m'
    \leqslant
    \sqrt{
         \frac
            {\CR(r,\ell)\ln(1/p)}
            {4 \ell(r-1)(2r-1)^{\ell-\lceil\lambda\ell\rceil-1}}
    }.
\]
Thus, by taking the upper bound $\CR(r,\ell)\leqslant 2(2r-1)^\ell$, we see that
\[
    m'
    \leqslant
    \sqrt{
         \frac
            {(2r-1)^{1+\lceil\lambda\ell\rceil}\ln(1/p)}
            {2 \ell(r-1)}
    }.
\]
Since $m \leqslant 1+2m'$, we have our result.
\end{proof}

From Proposition~\ref{prop:necessary bound on m}, we may prove Theorem~\ref{thm:upper} as follows.

\begin{proof}[Proof of Theorem~\ref{thm:upper}]

From Proposition~\ref{prop:upper-bound-on-probability} we have $p^\geqslant_\lambda(r,\ell,m) \geqslant p_\lambda(r,\ell,\ell,m)$.
Moreover, after some rearrangement of the bound obtained in Proposition~\ref{prop:necessary bound on m}, we find that
\[
	\ln(1/p^{\geqslant}_\lambda(r, \ell, m))
	\geqslant
	\frac{1}{4} {(m-1)}^2 \ell \frac{2r-2}{{(2r-1)}^{1-\lceil\lambda\ell\rceil}}.
\]
Then, since $2(2r-2) \geqslant 2r-1$ for each $r\geqslant 2$, we see that,
\[
    \ln(1/p^{\geqslant}_\lambda(r, \ell, m))
	\geqslant
	\frac{1}{8} {(m-1)}^2 \ell {(2r-1)}^{-\lceil\lambda\ell\rceil}
\]
 for each $m \geqslant 1$ and $r\geqslant 2$.
\end{proof}

\subsection{Optimal choice of relator length}%
\label{sec:relator-length}

In a way, an optimal choice of length $\ell$ is one for which there exists an integer $k \in \mathbb{N}$ such that $\ell = \lceil k/\lambda \rceil+1$.
For example, if $\lambda = 1/6$, then we would be interested in lengths of the form $\ell = 6\ell_1+1$ as they have the property that \[p_{\lambda}(r,\,6\ell_1+1,\,6\ell_1+1,\,m) \geqslant p_{\lambda}(r,\,6\ell_1+1+\ell_2,\,6\ell_1+1+\ell_2,\,m)\] for each $\ell_2$ with $0 \leqslant \ell_2 < 6$.
This property, as we see below, follows from the definition of small cancellation.

Notice that the length $\ell\geqslant 1$ can be uniquely written  as $\ell = \ell_1/\lambda + \ell_2$ where $\ell_1 \in \mathbb{N}$ and $\ell_2 \in \mathbb{R}$ with $0 < \ell_2 \leqslant 1/\lambda$.
Then, we see that a presentation, $\mathcal{P}=\left\langle X \mid R \right\rangle$, with length $\ell$ relators fails property $\C'(\lambda)$ if and only if either
\begin{enumerate}
	\item there are two words $u,v \in W$ and offsets $d_1,d_2$, with each $0 \leqslant d_i < \ell$, such that $u_{\ll d_1}$ and $v_{\ll d_2}$ share a length $\lceil\lambda\ell\rceil=\ell_1+1$ prefix; or
	\item there is a word $w \in W$ and two offsets $d_1,d_2$, with $0\leqslant d_1 < d_2 < \ell$, such that $w_{\ll d_1}$ and $w_{\ll d_2}$ share a length $\lceil\lambda\ell\rceil=\ell_1+1$ prefix.
\end{enumerate}

\noindent
Thus, we see that increasing $\ell_2$ within the range $0 < \ell_2 \leqslant 1/\lambda$ can only increase the probability of $W$ containing such a choice of words and thus decrease the probability of small cancellation.
Hence, with $\ell$ in the range $\lceil k/\lambda\rceil + 1 \leqslant \ell < \lceil (k+1)/\lambda\rceil+1$, the probability, $p_\lambda(r,\ell,\ell,m)$, of small cancellation is maximal at $\ell = \lceil k/\lambda\rceil+1$.

\appendix
\section{Experimental Results}%
\label{sec:experimental-results}

In this appendix we compare our lower and upper bounds, from Section~\ref{sec:lower-bound} and Proposition~\ref{prop:upper-bound-on-probability} respectively, with estimates of $p_\lambda(r,\ell,\ell,m)$ obtained from computational experiment.
The code used to create this section is provided at \cite{githubcode}.
In particular, we present several heatplots which show how our bounds on $p_\lambda(r,\ell,\ell,m)$ compare as we vary the values of $r$, $\ell$ and $m$.
Each data-point in each heatplot was obtained from a data sample consisting of at least 35\,000 randomly chosen presentations.
Within this appendix, unless otherwise specified, $\lambda=1/6$.

In Figure~\ref{fig:l-vs-m heatmap} we fix the number of generators, $r$, to 20, and compare the probability of small cancellation, $p_\lambda(20,\ell,\ell,m)$, as we vary the number of relators, $m$, and the length of such relators, $\ell$.
Counterintuitively, it appears that the probability of small cancellation is not monotone non-decreasing with respect to the relator length $\ell$.
In fact, the probability appears to be decreasing within ranges of length $6=1/\lambda$.
A similar phenomenon appears again in Figure~\ref{fig:l-vs-r heatmap}, in which the number of relators, $m$, is fixed to $10$ and the probability $p_\lambda(r,\ell,\ell,10)$ is compared as $r$ and $\ell$ are varied.
Moreover, we see that, if we instead set $\lambda = 1/100$, as in Figure~\ref{fig:l-vs-r heatmap100}, then we obtain the same pattern where the probability decreases within ranges of size $100=1/\lambda$.
The reason behind this pattern is explained in Section~\ref{sec:relator-length}.

Finally, in Figure~\ref{fig:r-vs-m heatmap}, we fix the relator length, $\ell$, to 20, and compare the probabilities of small cancellation, $p_\lambda(r,20,20,m)$, as we vary the number of generators, $r$, and relators, $m$.

\begin{figure}[!htp]
\centering

\subfloat[Lower bound from Section~\ref{sec:lower-bound}.]{
	\centering
	\includegraphics[width=.48\linewidth]{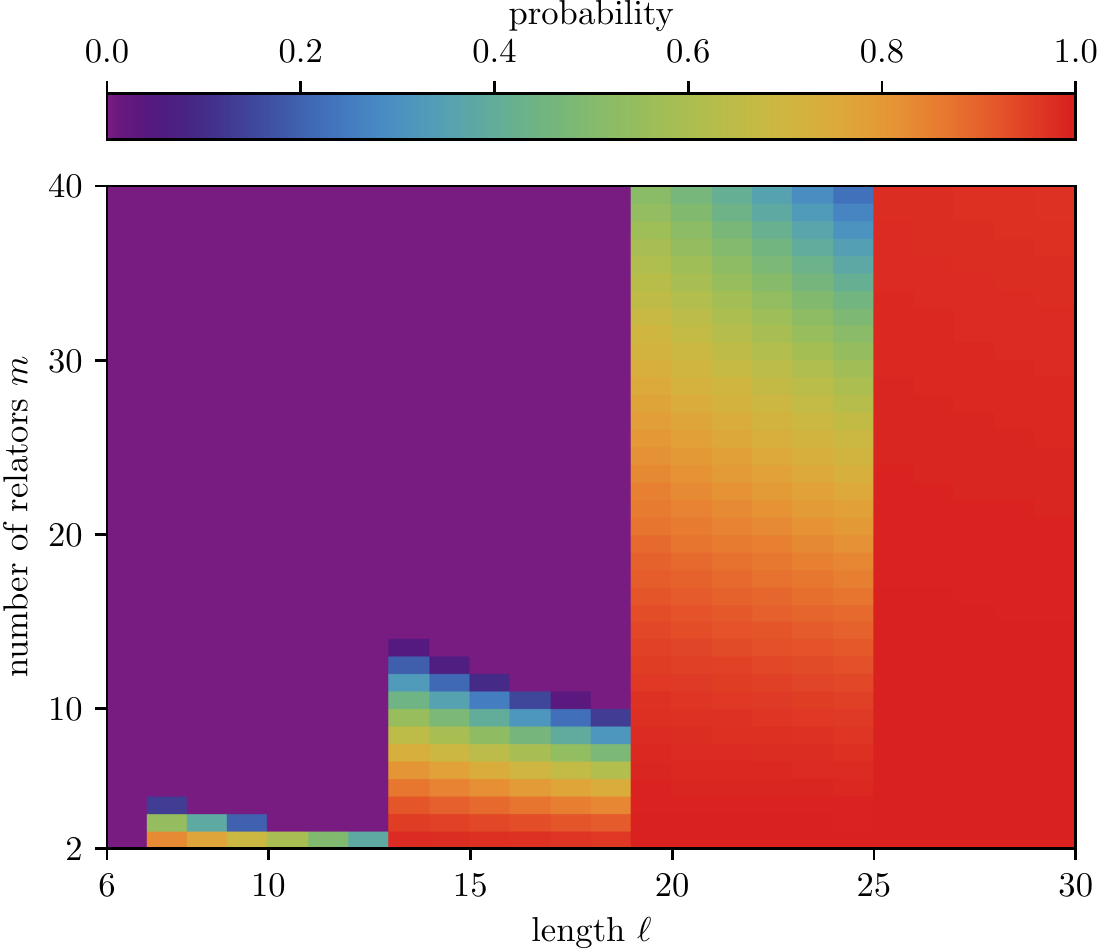}
}%
\hfill%
\subfloat[Upper bound from Proposition~\ref{prop:upper-bound-on-probability}.]{
	\centering
	\includegraphics[width=.48\linewidth]{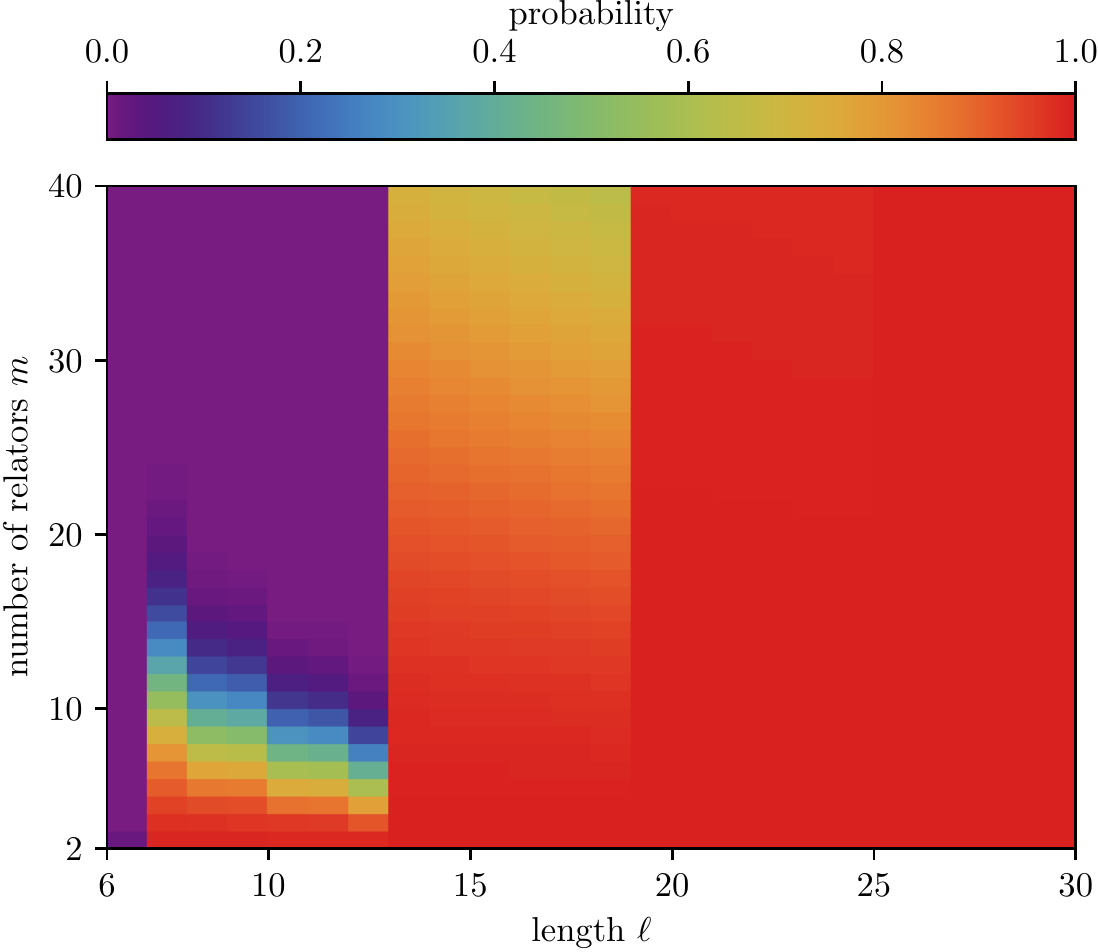}
}

\vspace{1.5em}

\subfloat[Experimental approximation.]{
	\centering
	\includegraphics[width=.48\linewidth]{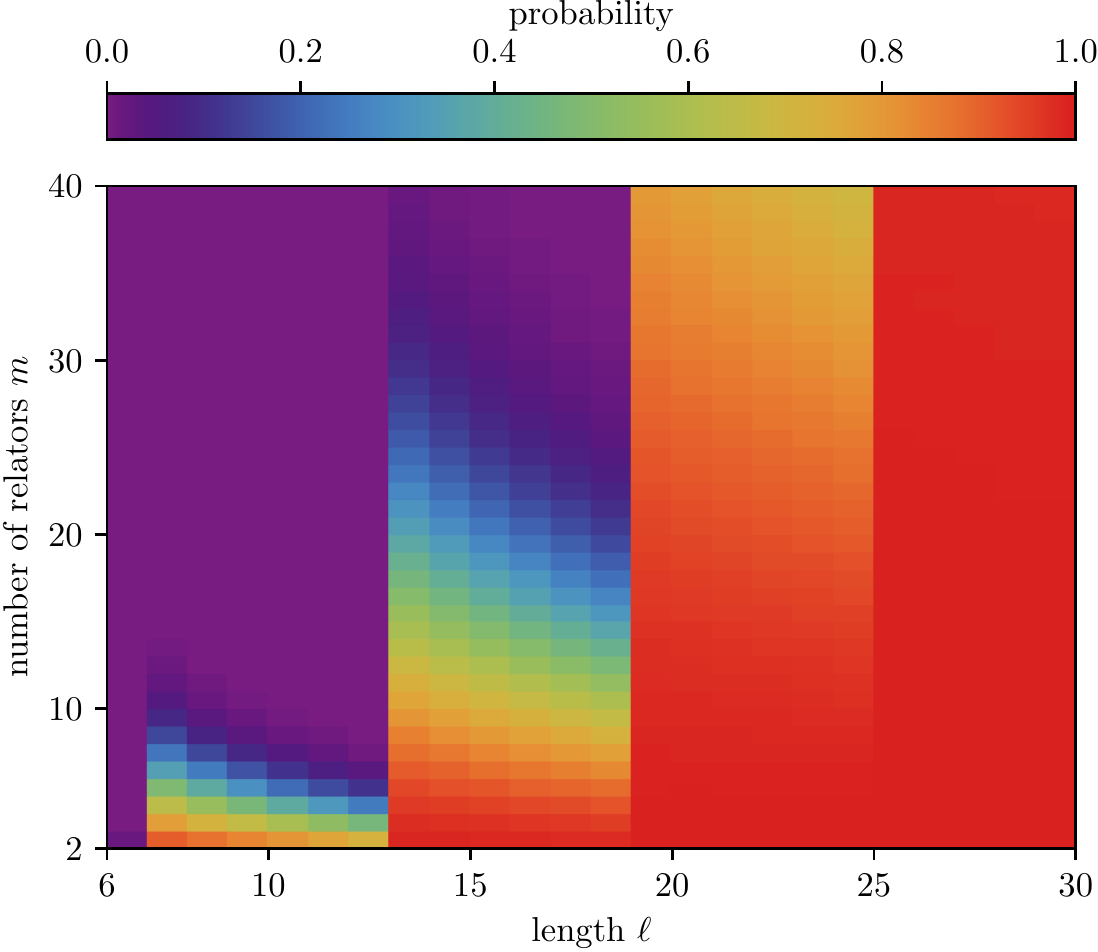}
}

\caption{Heatmaps giving upper and lower bounds, and an experimental approximation of $p_{\lambda}(20,\ell,\ell,m)$ as $\ell$ and $m$ are varied, with $r$ fixed to be $20$.}%
\label{fig:l-vs-m heatmap}
\end{figure}

\begin{figure}[!htp]
\centering

\subfloat[Lower bound from Section~\ref{sec:lower-bound}.]{
	\centering
	\includegraphics[width=.48\linewidth]{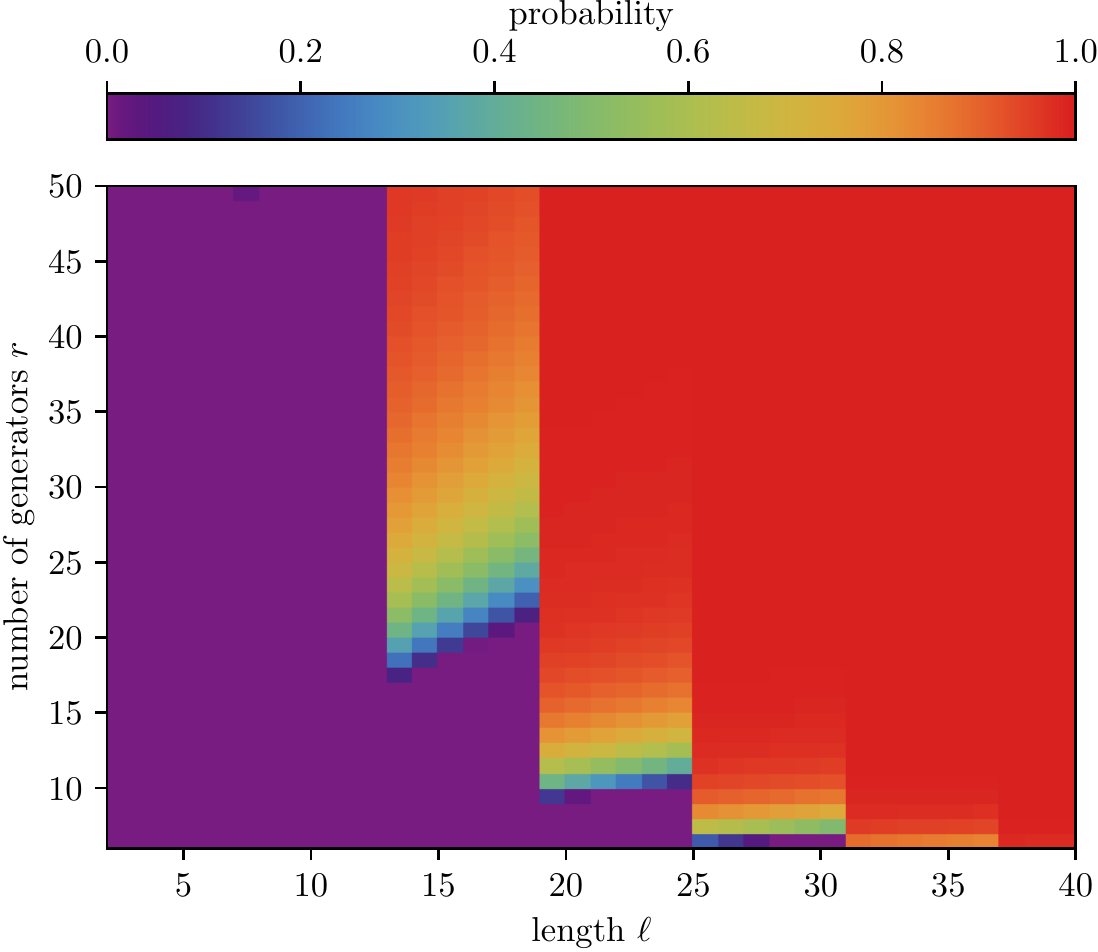}
}%
\hfill%
\subfloat[Upper bound from Proposition~\ref{prop:upper-bound-on-probability}.]{
	\centering
	\includegraphics[width=.48\linewidth]{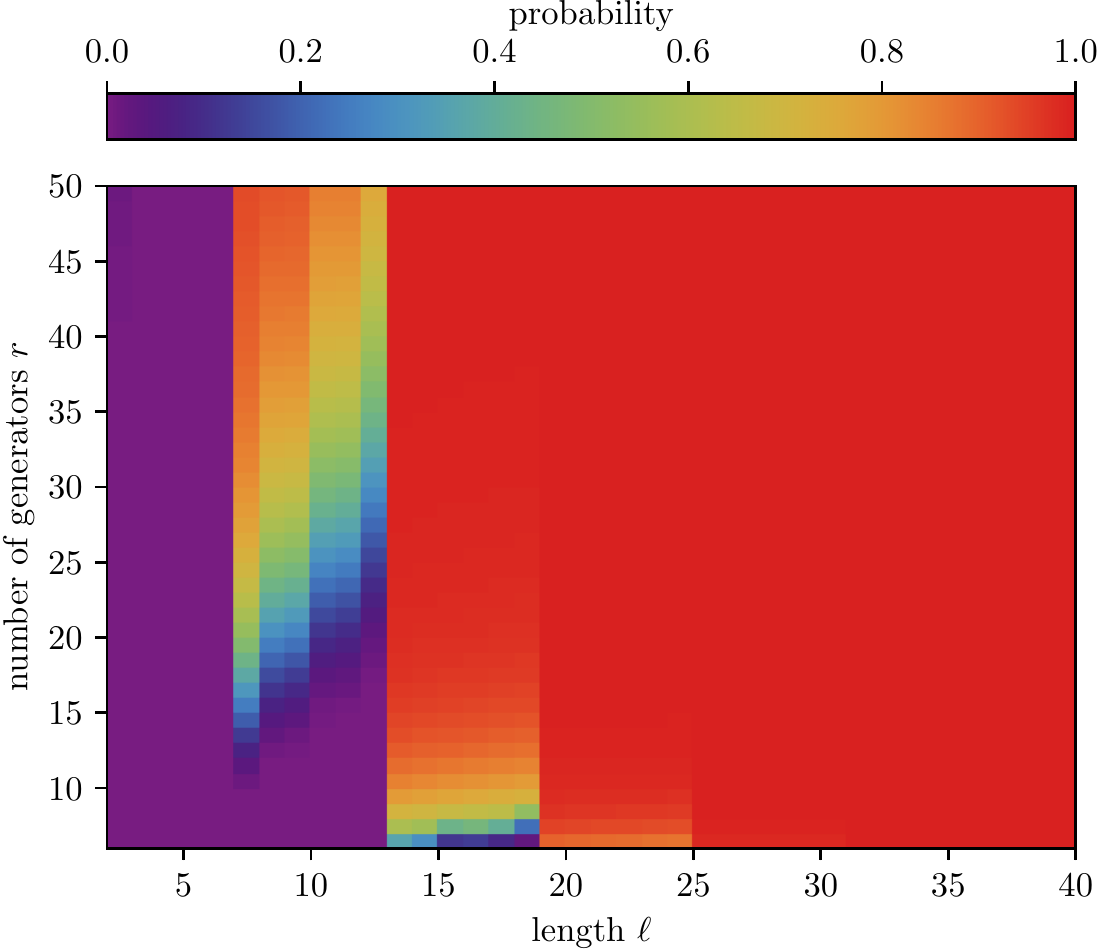}
}

\vspace{1.5em}
\subfloat[Experimental approximation.]{
	\centering
	\includegraphics[width=.48\linewidth]{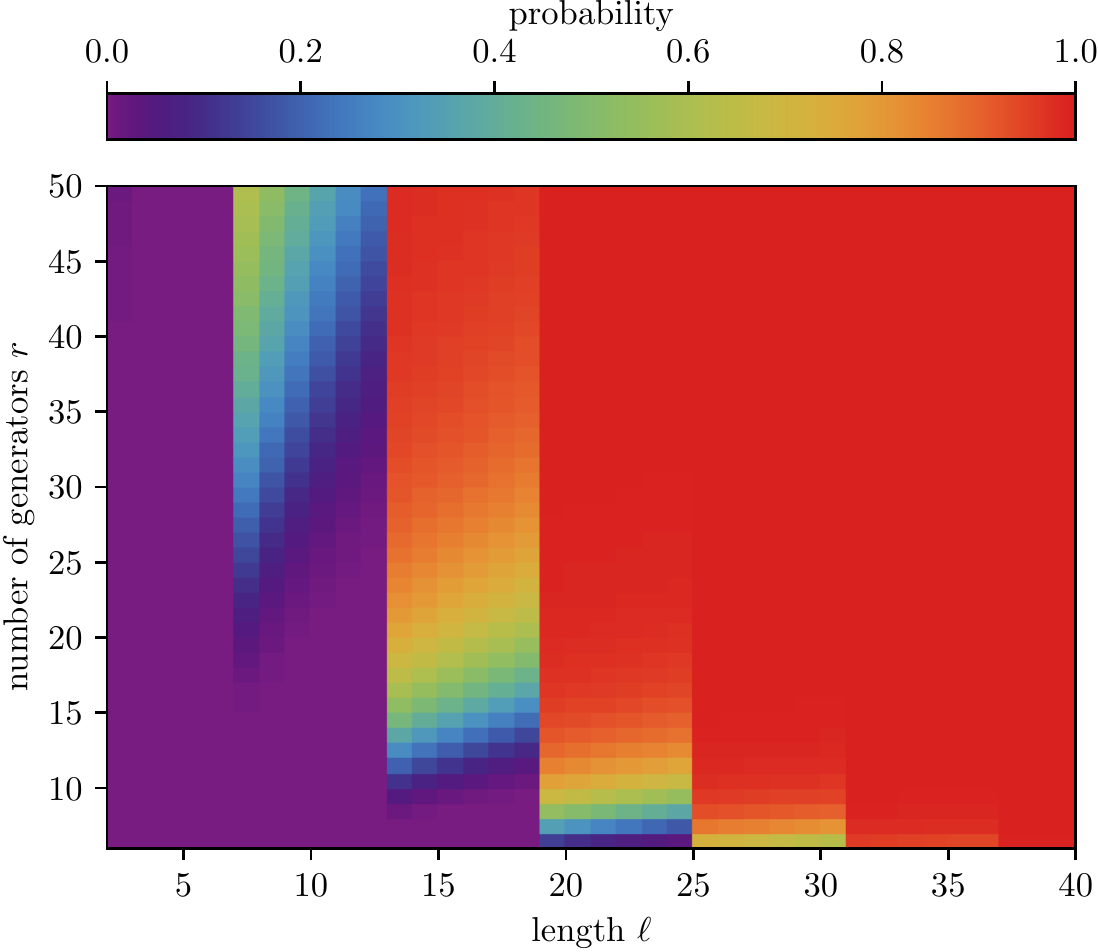}
}

\caption{Heatmaps giving upper and lower bounds, and an experimental approximation of $p_{\lambda}(r,\ell,\ell,10)$ as $\ell$ and $r$ are varied, with $m$ fixed to be $10$.}%
\label{fig:l-vs-r heatmap}
\end{figure}

\begin{figure}[!htp]
\centering

\subfloat[Lower bound from Section~\ref{sec:lower-bound}.]{
	\centering
	\includegraphics[width=.48\linewidth]{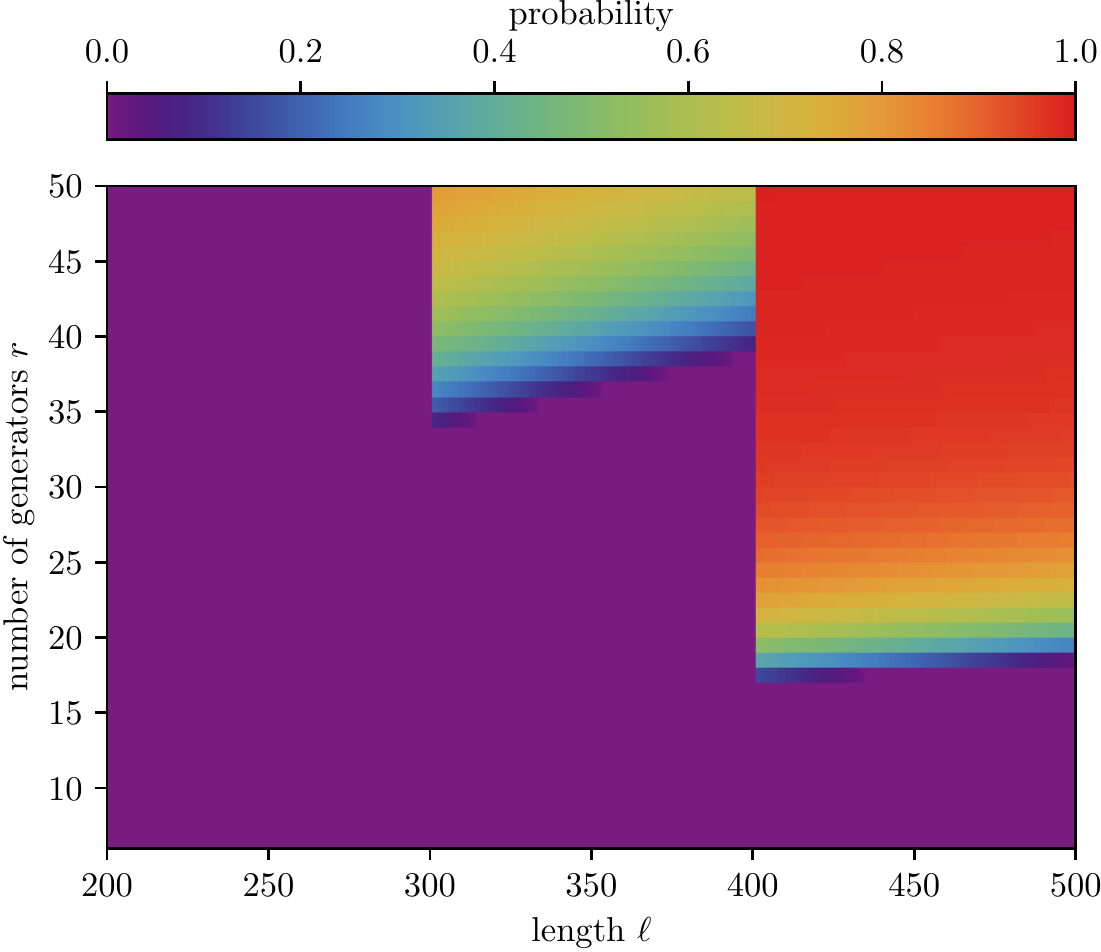}
}%
\hfill%
\subfloat[Upper bound from Proposition~\ref{prop:upper-bound-on-probability}.]{
	\centering
	\includegraphics[width=.48\linewidth]{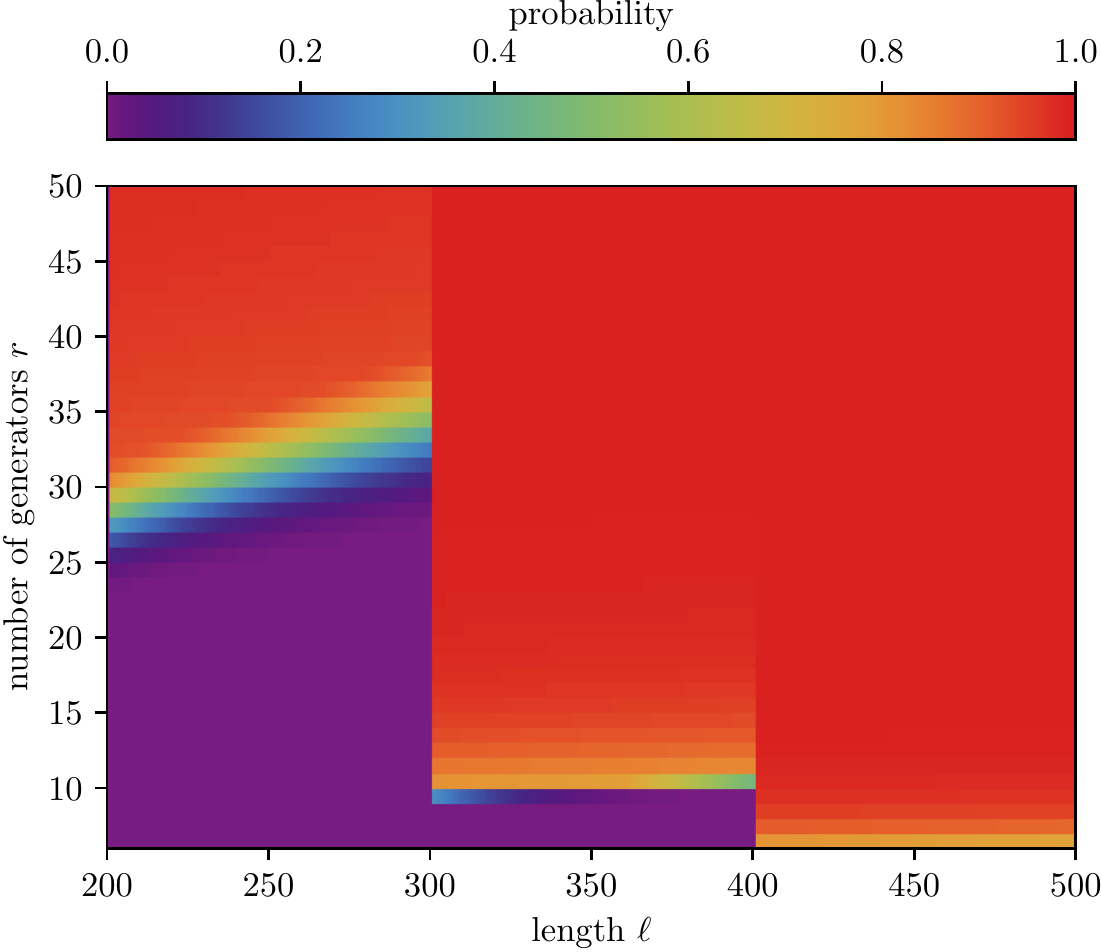}
}

\vspace{1.5em}

\subfloat[Experimental approximation.]{
	\centering
	\includegraphics[width=.48\linewidth]{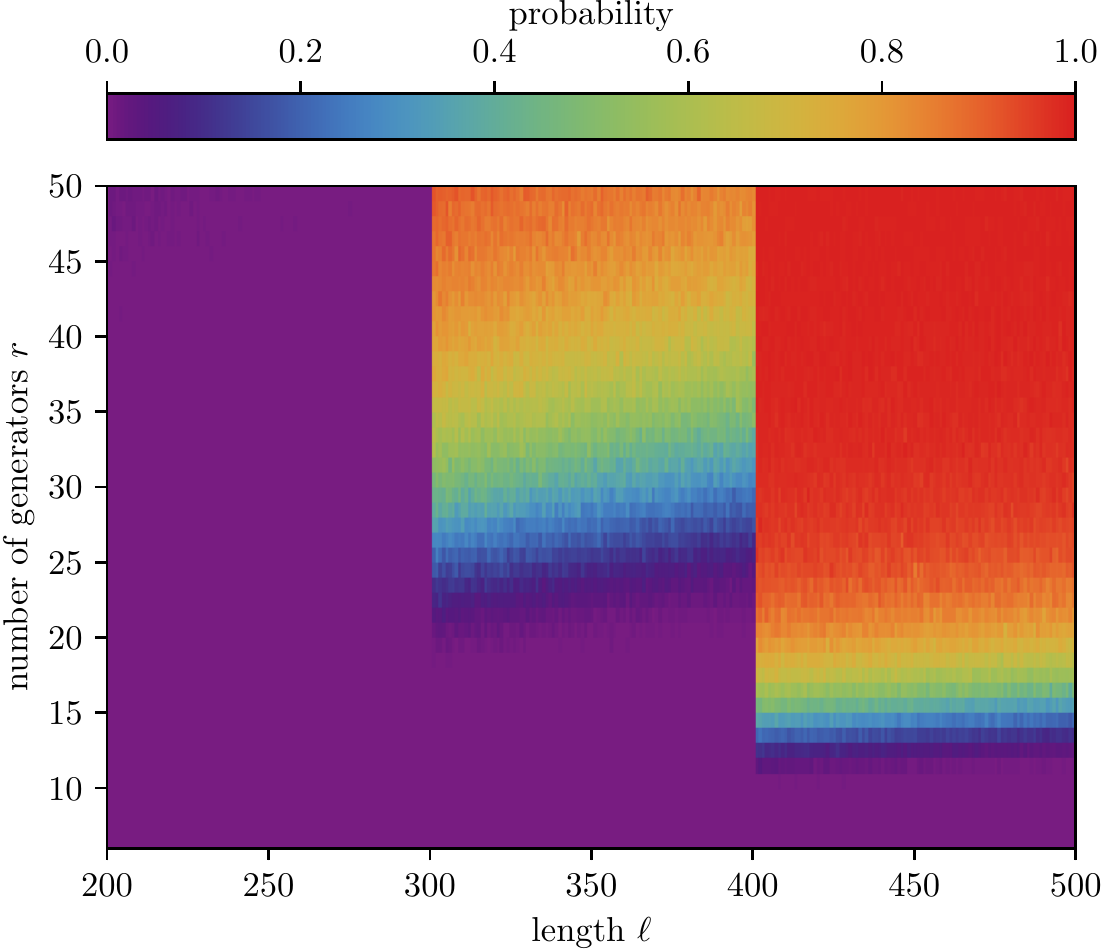}
}

\caption{Heatmaps giving upper and lower bounds, and an experimental approximation of $p_{1/100}(r,\ell,\ell,10)$ as $\ell$ and $r$ are varied, with $m$ fixed to be $10$ and $\lambda=1/100$.}%
\label{fig:l-vs-r heatmap100}
\end{figure}

\begin{figure}[!htp]
\centering

\subfloat[Lower bound from Section~\ref{sec:lower-bound}.]{
	\centering
	\includegraphics[width=0.48\linewidth]{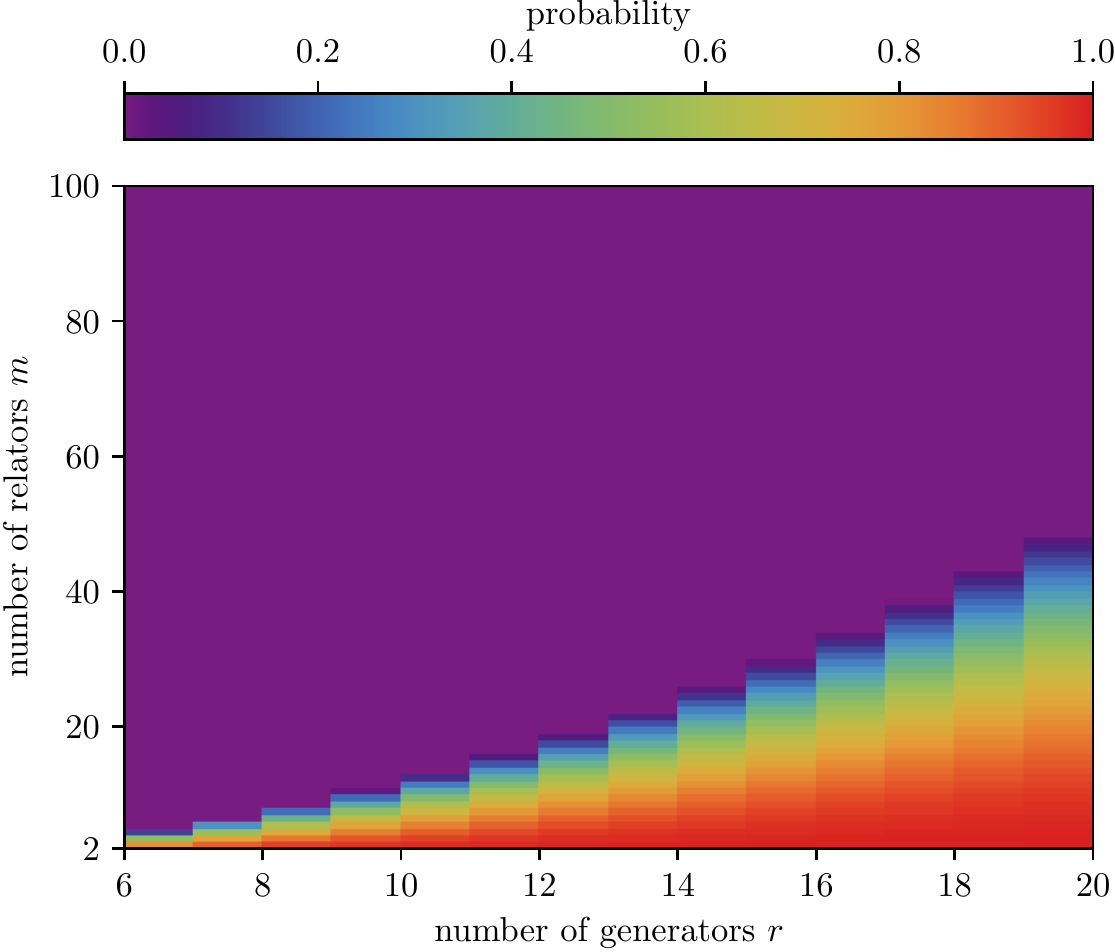}
}%
\hfill%
\subfloat[Upper bound from Proposition~\ref{prop:upper-bound-on-probability}.]{
	\centering
	\includegraphics[width=0.48\linewidth]{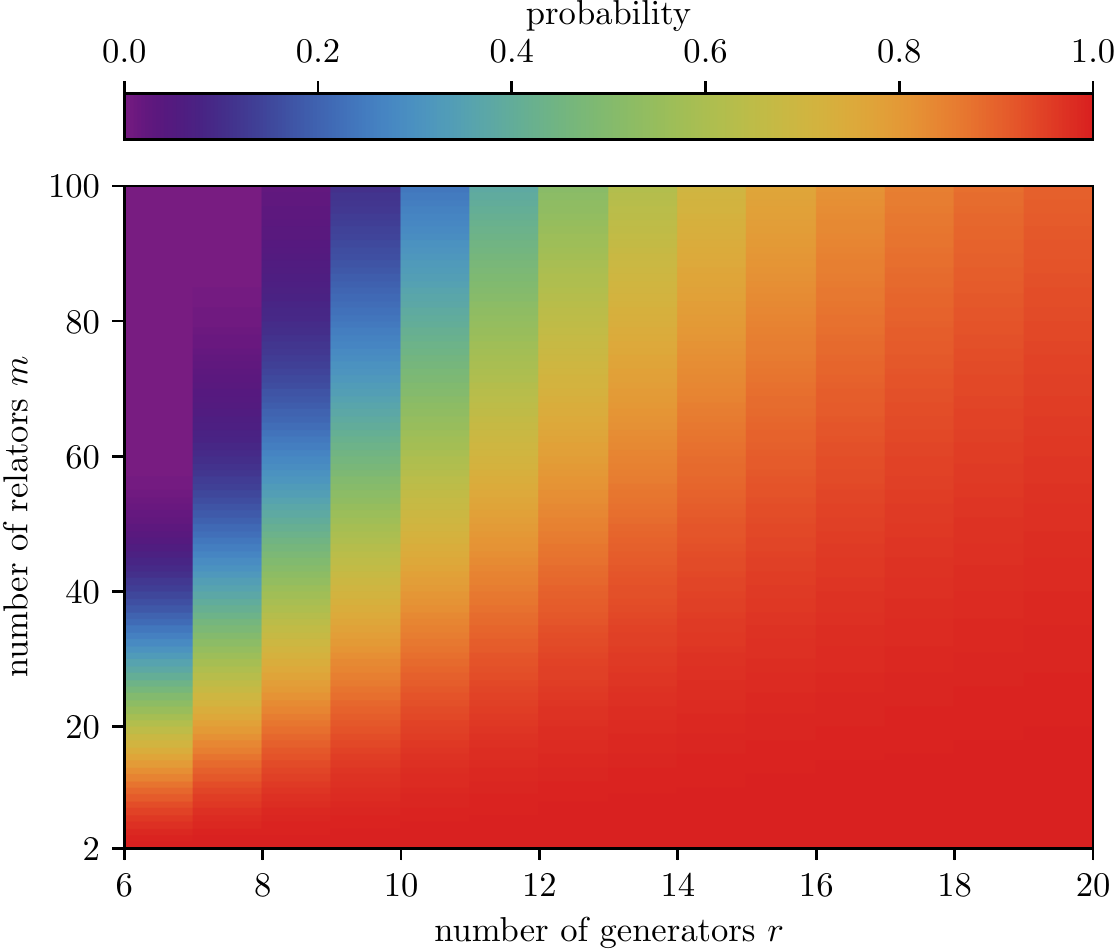}
}

\vspace{1.5em}

\subfloat[Experimental approximation.]{
	\centering
	\includegraphics[width=0.48\linewidth]{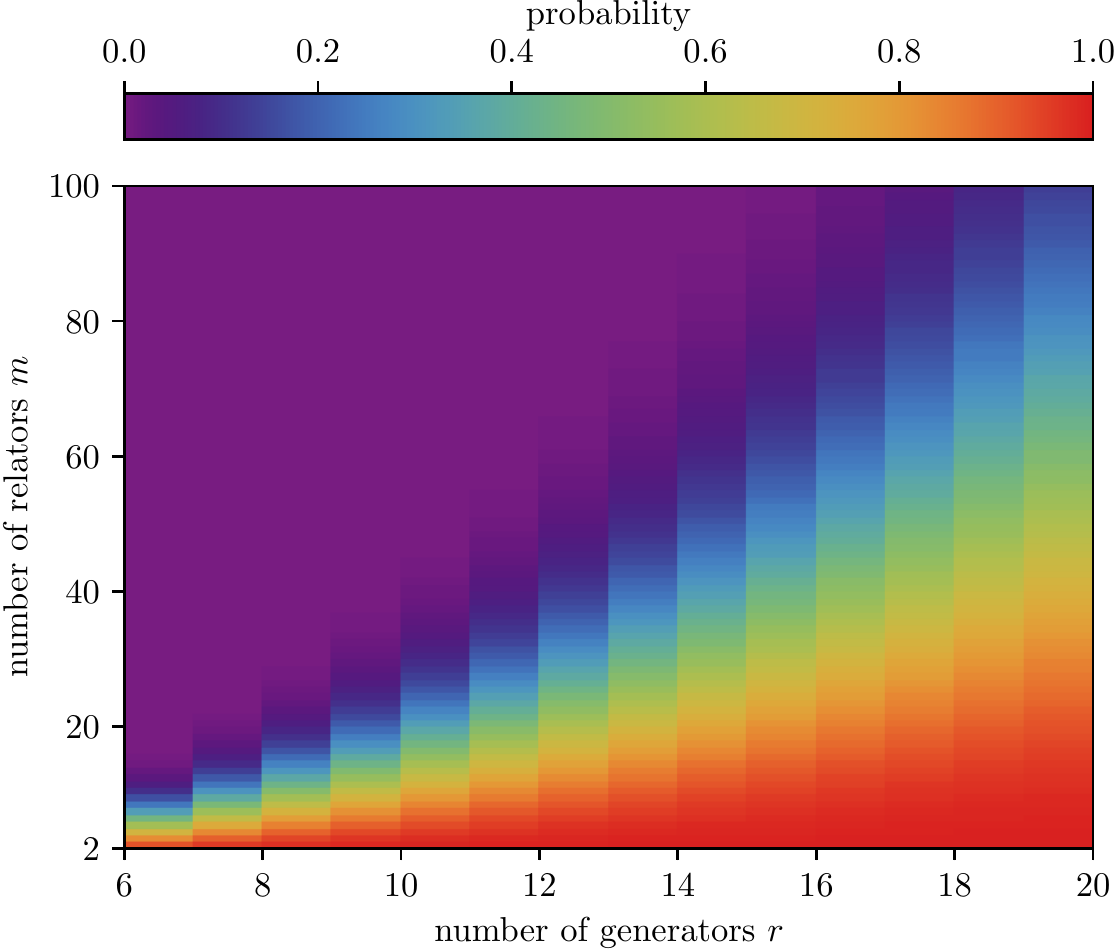}
}

\caption{Heatmaps giving upper and lower bounds, and an experimental approximation of $p_{\lambda}(r,20,20,m)$ as $r$ and $m$ are varied, with $\ell$ fixed to be $20$.}%
\label{fig:r-vs-m heatmap}
\end{figure}

\clearpage

\section*{Acknowledgements}

A large proportion of the theoretical work presented in this paper was a part of the second named author's master thesis at Charles University in Prague, Czech Republic, which was supervised by Pavel P\v r\'\i hoda.
The first named author acknowledges support from an Australian Government Research Training Program Scholarship.
The authors also acknowledge support from Australian Research Council grant DP160100486.

\bibliography{refs}
\bibliographystyle{plain}

\end{document}